\numberwithin{equation}{section}
\date{13 December 2022}
\title[Generators of the group of modular units
for $\Gamma^1(N)$ over $\QQ$]{Generators of the group of modular
units for $\Gamma^1(N)$ over the rationals}
\author{Marco Streng}
\thanks{Universiteit Leiden, Niels Bohrweg 1, 2333 CA  Leiden, The Netherlands.
\url{streng@math.leidenuniv.nl},
\url{http://pub.math.leidenuniv.nl/~strengtc/}
The author would like to thank
Peter Bruin, Maarten Derickx,
P\i{}nar K\i{}l\i{}\c{c}er and Mark van Hoeij
for helpful discussions
and the anonymous referee for helpful suggestions for improving the exposition.
}
\newtheorem{theorem}{Theorem}[section]
\theoremstyle{definition}
\newtheorem{remark}[theorem]{Remark}
\newtheorem{example}[theorem]{Example}
\newtheorem{proposition}[theorem]{Proposition}
\newtheorem{corollary}[theorem]{Corollary}
\newtheorem{lemma}[theorem]{Lemma}
\newcommand{\ZZ}{\mathbf{Z}}
\newcommand{\QQ}{\mathbf{Q}}
\newcommand{\CC}{\mathbf{C}}
\newcommand{\RR}{\mathbf{R}}
\renewcommand{\AA}{\mathbf{A}}
\newcommand{\SL}{\mathrm{SL}}
\newcommand{\HH}{\mathbf{H}}
\newcommand{\PP}{\mathbf{P}}
\begin{document}

\begin{abstract}
We give two explicit sets of generators of the
group of invertible regular functions over~$\QQ$
on the modular curve~$Y^1(N)$.

The first set of generators
is very surprising.
It is essentially the set of defining
equations of $Y^1(k)$ for $k\leq N/2$
when all these modular curves
are simultaneously embedded into
the affine plane,
and this proves a conjecture of
Derickx and Van Hoeij~\cite{derickx-vanhoeij}.
This set of generators
is an elliptic divisibility sequence
in the sense that it
satisfies the same recurrence relation
as the elliptic division polynomials.

The second set of generators
is explicit in terms of classical analytic functions
known as Siegel functions.
This is both a generalization and a converse
of a result of
Yang~\cite{yifanyang,yifanyang2}.
\end{abstract}

\maketitle

\section{Introduction}
\label{sec:intro}

Let $N\geq 1$ be an integer.
The modular curve $Y^1(N)$ is a smooth, affine,
geometrically irreducible
algebraic curve over~$\QQ$,
often also denoted by $Y_1(N)$.
It has the following property:
For every field~$K$
of characteristic zero,
if $N\geq 4$ or $K$ is algebraically closed,
then we have
\begin{align*}
&Y^1(N)(K) = \\
&\{(E, P) : \text{$E$ is an elliptic curve over $K$ and $P\in E(K)$ has order $N$}\}/\cong.
\end{align*}
Here ``='' denotes a functorial Galois-equivariant bijection,
which we use to identify the left and right hand side;
and we write $(E_1,P_1)\cong (E_2,P_2)$
when there is an isomorphism $\phi : E_1\rightarrow E_2$
with $\phi(P_1)=P_2$.

Our object of study is the 
group of \emph{modular units on $Y^1(N)$},
that is, the
unit group $\mathcal{O}(Y^1(N))^*$
of the ring $\mathcal{O}(Y^1(N))$ of regular
functions over $\QQ$ on $Y^1(N)$.
The curve $Y^1(N)$ has a smooth compactification
$X^1(N)$, and the group
$\mathcal{O}(Y^1(N))^*$
equals the group of meromorphic functions over $\QQ$
on $X^1(N)$ with divisor supported on the set
$X^1(N)\setminus Y^1(N)$ of \emph{cusps}.

The \emph{Tate normal form} (Section~\ref{sec:tateform})
gives an embedding
$Y^1(N)\hookrightarrow\AA^2$
for every $N\geq 4$, 
with the point $(B,C)\in\AA^2$ corresponding to the curve
\begin{equation}\label{eq:tateform1}
	E:Y^2 + (1-C)XY - BY = X^3 - BX^2\quad\mbox{and point}\quad P=(0,0).
\end{equation}
Our first main result is as follows.
\begin{theorem}[Conjecture 1 of Derickx and Van Hoeij~\cite{derickx-vanhoeij}]\label{thm:mainthm}
For all $k\geq 4$, let $F_k\in\QQ[B,C]$ be the defining polynomial of
$Y^1(k)$ inside~$\AA^2$.
Then for all $N\geq 4$, the group $\mathcal{O}(Y^1(N))^*$
is $\QQ^*$ times the free abelian group on
$B$, $D$, $F_4$, $F_5,\ldots,F_{\lfloor N/2\rfloor +1}$,
where $D\in\QQ[B,C]$ is the discriminant of~\eqref{eq:tateform1}.
\end{theorem}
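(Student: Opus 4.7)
The plan is to prove the theorem by introducing a second, analytic description of a candidate generating set built from Siegel functions, and matching it with the polynomials $F_n$. For each nonzero $a \in \ZZ/N\ZZ$ (modulo $\pm 1$), take the Siegel function $g_a$ and consider products $\prod_a g_a^{m(a)}$ satisfying the Kubert--Lang quadratic conditions that guarantee $\Gamma_1(N)$-invariance, together with the $(\ZZ/N\ZZ)^*$-invariance condition needed to descend to a function on $X^1(N)$ over $\QQ$. Call the resulting subgroup of $\mathcal{O}(Y^1(N))^*/\QQ^*$ the Siegel subgroup $U$.

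The first part of the proof is to write each of $B$, $D$, $F_4, F_5, \ldots, F_{\lfloor N/2\rfloor+1}$ explicitly as a monomial in the Siegel functions, up to a rational scalar. For $F_n$, the key observation is that $F_n=0$ on $X^1(N)$ cuts out the pullback under the natural map $X^1(N)\to X^1(n)$ of the boundary locus of the Tate model; matching the orders of vanishing at each cusp of $X^1(N)$ against the explicit cusp-divisor formulas for Siegel functions pins down the monomial expression. The recurrence relation for the $F_n$ predicted by \cite{derickx-vanhoeij} should be visible on the Siegel side from the standard distribution/division relations.

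The second, harder part is to show that these Siegel-monomials exhaust $\mathcal{O}(Y^1(N))^*/\QQ^*$. Here the plan is a two-step argument: (a) enumerate the cusps of $X^1(N)$ over $\QQ$ and compute the rank of the divisor map on $U$, to confirm that $U$ has full rank inside the lattice of principal cusp-supported divisors; (b) use $q$-expansions at $i\infty$ together with Gauss' lemma for power series, in the style of Kubert--Lang, to rule out any enlargement of $U$ inside $\mathcal{O}(Y^1(N))^*/\QQ^*$. Concretely, a putative extra unit $u$ can be multiplied by a monomial in our generators so that its divisor has a minimal prescribed shape; the $q$-expansion of the modified $u$ is then forced to lie in $\QQ((q))$, and applying Gauss' lemma to a factorization of this power series produces integrality constraints on the coefficients that are incompatible with $u$ being genuinely new. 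Freeness of the generated subgroup follows from linear independence of the associated cusp divisors.

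The main obstacle will be part (b) of the second step: controlling the exact index, not merely bounding it. The classical Kubert--Lang machinery is set up on $X_1(N)$ over a cyclotomic field, whereas here one works on the Galois quotient $X^1(N)$ over $\QQ$, so the descent must be tracked carefully through the $(\ZZ/N\ZZ)^*$-action on cusps; moreover the final answer must be phrased in the arithmetic basis $\{F_n\}$ rather than in Siegel functions, which requires the first part to be explicit enough to invert the change of basis. I expect $B$ and $D$ to enter precisely as the ``two missing'' generators accounting for cusps of $X^1(N)$ not visible from the lower-level polynomials $F_n$ alone.
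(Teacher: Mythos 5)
Your overall strategy is the paper's: express the arithmetic generators as monomials in the Siegel functions $h_{(k/N,0)}$, $1\le k\le\lfloor N/2\rfloor$, cut out a subgroup by Kubert--Lang-type congruence conditions, show it has full rank by counting Galois orbits of cusps, and then use Gauss' lemma for power series together with bounded denominators of modular forms to forbid any enlargement. Two places where the paper's execution differs from, and simplifies, your plan are worth noting. First, the bridge from $F_n$ to Siegel functions is not built by matching cusp divisors (which only determines the monomial up to a constant and, more seriously, presupposes that $\mathrm{div}(F_n)$ already lies in the Siegel divisor lattice -- a near-circularity in your setup); instead the paper uses the exact elliptic-divisibility identity $\psi_n = \sigma(nz,\Lambda)/\sigma(z,\Lambda)^{n^2}$, which after normalising to the Tate form gives $p_n = t^{n^2-1}h_{(n/N,0)}h_{(1/N,0)}^{-1}$ with $t = h_{(1/N,0)}^2h_{(3/N,0)}h_{(2/N,0)}^{-3}$ on the nose, and the inverse change of basis for free. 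Second, the descent difficulty you flag as the main obstacle is dissolved rather than solved: working with $\Gamma^1(N)$ (upper-triangular congruence) and the point $\tau/N$ makes ``defined over $\QQ$'' equivalent to ``rational $q$-expansion at $\infty$,'' so no $(\ZZ/N\ZZ)^*$-descent is ever performed. The exact index is then controlled not by a contradiction with divisors but by a concrete mechanism your step (b) leaves unspecified: Gauss' lemma applied to reduced $q$-expansions forces each exponent $e(k)$ to be an integer (half-integer only when $2k=N$), and two explicit invariance computations -- the $\eta$-multiplier of $\left(\begin{smallmatrix}1&0\\1&1\end{smallmatrix}\right)$ giving $\sum_k e(k)\equiv 0 \bmod 12$, and integrality of the leading $q^{1/N}$-exponent under $\tau\mapsto\tau+N$ giving $\sum_k k^2e(k)\equiv 0\bmod N\gcd(2,N)$ -- pin down the subgroup exactly and remove the residual half-integer ambiguity. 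You would need to supply this last step explicitly for your argument to close.
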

The functions $F_{k}$ are given
in terms of a recurrence relation, which we recall
in Remark~\ref{rem:recurrence}.

The theorem is interesting for a number of reasons.
First of all, Derickx and Van Hoeij~\cite{derickx-vanhoeij}
already used the functions in the theorem
in order to compute the gonality of $Y^1(N)$ for all positive
integers
$N\leq 40$ and to give an upper bound on the gonality for $N\leq 250$.
Our theorem helps explain why their method was successful.

Moreover, they found that the gonality is often achieved by functions
from this set of generators. In particular,
these functions are ``small'' functions in some sense, which
we therefore hope are 
suitable for finding ``small'' models of
modular curves $Y^1(N)$.
Finding such small models directly in terms of another algebraic
model has the advantage that no approximate numerics
(such as floating point
numbers or truncated power series)
are needed in producing these models,
as would be the case when using theta functions or Siegel functions
directly or using modular forms.

Thirdly, as we will see in Section~\ref{sec:tateform},
the functions $F_k$ are the primitive divisors
of an \emph{elliptic divisibility sequence (EDS)} $P_1,P_2,P_3,\ldots$
over
the ring $\QQ[B,C]$,
which is in a way the \emph{universal} EDS
as it comes from the Tate normal form.
In line with
Ingram-Mah\'e-Silverman-Stange-Streng~\cite{edsff}
and Naskręcki~\cite{Naskrecki_NewYork},
all but finitely many terms $P_{k}$ have a primitive
divisor. In fact, we prove that all terms $P_{k}$
with $k>3$ have a \emph{unique} primitive divisor~$F_{k}$.

Finally, an explicit basis of the unit group
could be useful for computing cuspidal divisor
class groups similarly to~\cite{yifanyang2}.
 
The proof proceeds by first linking the functions $P_k$ to
classical analytic Siegel functions, and then
observing how a proof of Kubert and Lang for $Y(N)$
can be much simplified and strengthened when
applying it to $Y^1(N)$.
Our proof can be read without knowing
the proof of Kubert and Lang,
and can be seen as an introduction
into their methods due to the disappearance
of complications that arise in their proof.

We prove the main theorem using modular forms
over~$\CC$.
Let $\HH\subset\CC$ be the standard upper half
plane, write $\HH^*=\HH\cup \PP^1(\QQ)\subset\PP^1(\CC)$,
and write
\begin{equation}\label{eq:gamma1n}
\Gamma^1(N) = \left\{\begin{psmallmatrix}a & b \\ c & d\end{psmallmatrix}\in \SL_2(\ZZ) :
      b\equiv 0, a\equiv d\equiv 1\ \mathrm{mod}\ N\right\}.
\end{equation}
Recall the natural complex analytic isomorphism
\begin{align}\label{eq:ourparam}
 \Gamma^1(N)\backslash \HH &\longrightarrow  Y^1(N)(\CC) \\
 \tau &\longmapsto (\CC / \Lambda_\tau, \tau/N\ \mathrm{mod}\ \Lambda_\tau),
 \nonumber
\end{align}
where $\Lambda_\tau = \tau\ZZ+\ZZ$.
(If the reader is used to another parametrization,
see Remarks \ref{rem:altparam2} and~\ref{rem:nonstandard} below.)
The functions on $X^1(N)$ defined over~$\QQ$
correspond exactly to the meromorphic
functions on $\Gamma^1(N)\backslash \HH^*$
whose $q$-expansions at $\infty$ are rational,
that is, in~$\QQ((q^{1/N}))$ with
$q^a = \exp(2\pi i a \tau).$

The group $\mathcal{O}(Y^1(N))^*$ therefore
equals the group of meromorphic functions on
$\Gamma^1(N)\backslash \HH^*$ with rational $q$-expansion
and divisor supported 
on~$\PP^1(\QQ)$.

Our second main result is as follows.
For positive integers $k\leq N/2$, let $H_k$
be the \emph{Siegel function} given by
(see also \eqref{eq:Hk2})
\begin{equation}\label{eq:Hk} H_k(\tau) = iq^{\frac{1}{2}\left((k/N)^2-k/N+\frac{1}{6}\right)}(1-q^{k/N})\prod_{n=1}^{\infty}
(1-q^{n+k/N})(1-q^{n-k/N}).\end{equation}
\begin{theorem}\label{thm:mainthmtwo}
Let $$S = \left\{ \prod_{k=1}^{\lfloor N/2\rfloor}
H_{k}^{e(k)} :
\begin{array}{rcl}
\forall_k\ \  e(k)&\in&\ZZ,\\
 \sum_{k} e(k)&\in& 12\ZZ,\\
\sum_k k^2 e(k) &\in& \mathrm{gcd}(N,2)N\ZZ
\end{array}\right\}.$$
Then $S$ is free abelian of rank~$\lfloor N/2\rfloor $
and satisfies $\mathcal{O}(Y^1(N))^* = \QQ^*\cdot S$.
\end{theorem}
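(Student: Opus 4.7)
My plan is to prove the two inclusions $S\subseteq \mathcal{O}(Y^1(N))^*$ and $\mathcal{O}(Y^1(N))^*\subseteq \QQ^*\times S$ separately, with the rank calculation folded into the first. To see that $f=\prod_{k=1}^{\lfloor N/2\rfloor}h_{(k/N,0)}^{e(k)}$ lies in $\mathcal{O}(Y^1(N))^*$, I would verify: (i) $f$ is holomorphic and nowhere zero on $\HH$, which is standard for Siegel functions; (ii) $f$ is $\Gamma^1(N)$-invariant, which via the transformation law of $h_a$ under $\SL_2(\ZZ)$ (shifting the index $a$ and introducing a $12$-th root of unity) reduces precisely to the two congruences $\sum e(k)\in 12\ZZ$ and $\sum k^2 e(k)\in \gcd(N,2)N\ZZ$; and (iii) $f$ has a $q$-expansion in $\QQ((q^{1/N}))$, because each $h_{(k/N,0)}$ already does, the second coordinate $0$ keeping roots of unity out of the coefficients.

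For the rank, I would use the explicit formula $\mathrm{ord}_\infty h_{(k/N,0)}=\tfrac{1}{2}B_2(k/N)$ together with orders at one or two other cusps. The lattice $L\subseteq \ZZ^{\lfloor N/2\rfloor}$ cut out by the two congruences is of finite index in $\ZZ^{\lfloor N/2\rfloor}$, hence has rank $\lfloor N/2\rfloor$; nondegeneracy of the cusp-order map on $L$ shows that $L$ injects into $\mathcal{O}(Y^1(N))^*/\QQ^*$, so $S$ is free abelian of rank $\lfloor N/2\rfloor$.

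For the reverse inclusion, I would invoke Theorem~\ref{thm:mainthm}: it then suffices to show that each of the generators $B$, $D$, $F_4,\ldots,F_{\lfloor N/2\rfloor+1}$ lies in $\QQ^*\times S$. For each such generator $G$, I would compute its divisor on $X^1(N)$ and solve for $(e(k))\in L$ so that $\prod h_{(k/N,0)}^{e(k)}$ has the same divisor; the quotient is then a modular function on $X^1(N)$ with no zeros or poles, hence a constant. Gauss' lemma for power series, applied to the rational $q$-expansion of $G$ and the a priori cyclotomic $q$-expansion of the Siegel product, forces this constant to lie in $\QQ^*$.

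The main obstacle is the divisor-matching step: one must show that the two congruences defining $L$ are the \emph{only} linear obstructions to realising a cuspidal divisor of $X^1(N)$ as $\mathrm{div}\bigl(\prod h_{(k/N,0)}^{e(k)}\bigr)$. This requires the Kubert--Lang distribution relations for Siegel functions together with a careful enumeration of $\Gamma^1(N)$-orbits of cusps, since the divisor map lands in a free abelian group of rank larger than $\lfloor N/2\rfloor$ in general. The Gauss' lemma step afterwards --- upgrading ``constant'' to ``rational constant'' --- is the genuinely new ingredient beyond Kubert--Lang and is what tightens their modulo-constants classification to a modulo-$\QQ^*$ one.
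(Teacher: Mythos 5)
The central step of your argument is circular. For the inclusion $\mathcal{O}(Y^1(N))^*\subseteq\QQ^*\times S$ you ``invoke Theorem~\ref{thm:mainthm}'' and reduce to checking that $B$, $D$, $F_4,\ldots,F_{\lfloor N/2\rfloor+1}$ lie in $\QQ^*\times S$. But Theorem~\ref{thm:mainthm} is the other main theorem of this paper (it is Conjecture~1 of Derickx--Van Hoeij, previously open), and the two theorems are proved simultaneously: the only thing known in advance is that $B$, $D$ and the $F_n$ \emph{are} modular units, not that they \emph{generate} the unit group. Deleting the appeal to Theorem~\ref{thm:mainthm} leaves you with no handle on an arbitrary $f\in\mathcal{O}(Y^1(N))^*$. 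The paper's route to this inclusion is entirely different: first a rank bound (the number of Galois orbits of cusps minus one equals $\lfloor N/2\rfloor$) shows that $S$ has finite index in $\mathcal{O}(Y^1(N))^*/\QQ^*$, so $f=c\prod_k h_{(k/N,0)}^{e(k)}$ with a priori only \emph{rational} exponents $e(k)$; then Gauss' lemma for power series, combined with the bounded-denominator property of $q$-expansions of modular units, is used to extract roots and force $e(k)\in\ZZ$; finally the two congruences are derived from invariance under the explicit matrices $\left(\begin{smallmatrix}1&0\\1&1\end{smallmatrix}\right)$ and $\left(\begin{smallmatrix}1&N\\0&1\end{smallmatrix}\right)$. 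In your write-up Gauss' lemma appears only to upgrade a constant from $\CC^*$ to $\QQ^*$, which misses its essential role (root extraction); the rationality of the constant is handled in the paper by the trivial observation that $f/g$ is a constant function defined over $\QQ$.

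Two smaller points. Your step (ii) --- that the two congruences are \emph{sufficient} for $\Gamma^1(N)$-invariance of $\prod h_{(k/N,0)}^{e(k)}$ --- is true but not immediate; it requires controlling the eta-multiplier $\epsilon(M)$ and the roots of unity arising from the index shift $a\mapsto aM$ for all of $\Gamma^1(N)$, i.e.\ a Kubert--Lang-style quadratic-relations computation. The paper sidesteps this by identifying $S\times\QQ^*$ with the group generated by $b$, $d$ and the division-polynomial values $p_n$ (via explicit $\sigma$-function identities), which are already known to be modular units. Finally, the ``main obstacle'' you identify --- matching arbitrary cuspidal divisors by Siegel products via distribution relations --- is not actually needed in the paper's proof and would not by itself repair the circularity, since realising a divisor only determines a function up to a constant in $\CC^*$ and, more importantly, presupposes knowing which divisors of modular units occur.
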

\begin{remark}\label{rem:kubertlangremark}
Kubert and Lang
have results similar to Theorem~\ref{thm:mainthmtwo} for
the curve $Y(N)$
(Theorems 1 and~2 of \cite{kubert-lang-IV}; 
alternatively Theorems 1.1 and 1.2 in Chapter~4 of~\cite{kubert-lang-book}).
Indeed, the results of loc.~cit.\ can be combined
into an analogue of our Theorem~\ref{thm:mainthmtwo}
for $\mathcal{O}(Y(N)_{\CC})$, but for most~$N$
their result is only
`up to power of two index'.
For details, see Theorem 1.3 in Chapter 4 of \cite{kubert-lang-book}
and the text below it. See also Kubert~\cite{kubert-square-root}.
\end{remark}

\begin{remark}
	Theorem~\ref{thm:mainthmtwo} gives both a strengthening and a
	converse of Corollary~3 of Yang~\cite{yifanyang}.
	Indeed, loc.~cit.~gives
	the inclusion $S'\subset \QQ(Y^1(N))$
	if $S'\subset S$ is defined by
	the additional hypotheses
	$\sum_{k} k e(k)\in 2\ZZ$ and $\sum_{k} k^2 e(k)\in 2N\ZZ$.

	Theorems 1--5 of \cite{yifanyang2} give the analogue of
	Theorem~\ref{thm:mainthmtwo}
	if one restricts to the functions with 
	divisors supported on 
	cusps $\frac{x}{y}$ for $\gcd(x,N)=1$. 
	
	The dictionary between our functions
	and the functions of
	\cite{yifanyang, yifanyang2} is given in Remark~\ref{rem:nonstandard}
	below.
	And in fact, with the conventions of Remark~\ref{rem:nonstandard}
     the functions of \cite{yifanyang2}
     are those with divisors supported on cusps
     $\frac{x}{Ny}$ for $\gcd(x,N)=1$.
\end{remark}

\subsection{Overview and methods}
\label{sec:overview}

Our proof consists of two parts.
The first part is Section~\ref{sec:relating}, which relates the functions
of Theorems \ref{thm:mainthm} and~\ref{thm:mainthmtwo} via explicit
expressions in both directions.
We use formulas and techniques
from the theory of elliptic divisibility sequences
to relate division polynomials with the Weierstrass sigma function.

The second part is Section~\ref{sec:powerseries},
in which we show that our functions indeed generate the full group.
As in Kubert-Lang~\cite{kubert-lang-IV}, one of the key ideas is to
use the fact that every modular form with a rational
$q$-expansion can be scaled to have an integer $q$-expansion.
Together with Gauss' Lemma for power series with bounded
denominators, this will show that if~$g^l$ is in our group
for a modular function~$g$,
then so is~$g$ itself.
We show that this idea works even better in the
case of $\Gamma^1(N)$ over~$\QQ$
than in the case of \cite{kubert-lang-IV},
yielding results that are less
general, but stronger, simpler and more elegant than
the results of~\cite{kubert-lang-IV}.
A detailed overview of this part of the proof
is given at the beginning
of Section~\ref{sec:powerseries}.

Before we start the proof, Section~\ref{sec:functions}
gives precise definitions of
the functions appearing in Theorems \ref{thm:mainthm} and~\ref{thm:mainthmtwo}.

After the proof is finished, we give two results
that we get for free from our methods.
In Section~\ref{sec:ring}, we give 
generators of the \emph{ring} $\mathcal{O}(Y^1(N))$
instead of generators of the unit group,
and in Section~\ref{sec:theta},
we express the generators of the unit group in terms of theta functions.

\section{The functions appearing in the main results}
\label{sec:functions}

\subsection{The Tate normal form}
\label{sec:tateform}

Let $E$ be an elliptic curve over a field $K$
and $P\in E(K)$ a point of order $>3$
(possibly non-torsion).
\begin{lemma}[Tate normal form]\label{lem:tateform}
Every pair $(E, P)$ as above is isomorphic
to a unique pair of the form
\begin{equation}\label{eq:tateform}
E:Y^2 + (1-C)XY - BY = X^3 - BX^2,\quad P=(0,0)
\end{equation}
for $B, C\in K$ with 
\[D := B^{3} \cdot (C^{4} - 8 B C^{2} - 3 C^{3} + 16 B^{2} - 20 B C + 3 C^{2} + B -  C)\not=0.\]
Conversely, for every pair $B, C\in K$ with
$D\not=0$, equation~\eqref{eq:tateform} gives
a pair $(E, P)$.
\end{lemma}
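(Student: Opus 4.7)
The plan is to proceed in three stages: existence, uniqueness, and the converse.

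\emph{Existence.} Start from any Weierstrass model of $E$ and translate coordinates so that $P=(0,0)$; this makes $a_6=0$. The assumption that $P$ has order larger than $2$ forces $a_3\neq 0$, since $-P=(0,-a_3)$. The admissible change of variables $(X,Y)\mapsto(X,Y+sX)$ with $s=a_4/a_3$ then kills $a_4$ while preserving $P=(0,0)$ and $a_6=0$. In the resulting equation the tangent to $E$ at $P$ is the line $Y=0$, which meets $E$ again at $(-a_2,0)$, so $2P=(-a_2,a_1a_2-a_3)$. Since $3P=O$ is equivalent to $2P=-P=(0,-a_3)$, the hypothesis $\mathrm{ord}(P)>3$ gives $a_2\neq 0$. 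A final rescaling $(X,Y)\mapsto(u^2X,u^3Y)$ with $u=a_3/a_2$ achieves $a_2=a_3$, and setting $B=-a_3$ and $C=1-a_1$ yields~\eqref{eq:tateform}.

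\emph{Uniqueness.} Any two Weierstrass equations for $(E,P)$ differ by an admissible change of variables of the form $(X,Y)\mapsto(u^2X,u^3Y+su^2X)$, the only such transformations fixing both the origin and $P=(0,0)$. Applying the standard transformation formulae to~\eqref{eq:tateform} gives $a_4'=sB/u^4$, $a_3'=-B/u^3$, and $a_2'=(-B-s(1-C)-s^2)/u^2$. Requiring the transformed equation to be again in Tate normal form with $B\neq 0$ forces first $s=0$ (from $a_4'=0$), and then $a_2'=a_3'$ forces $u=1$. Hence the pair $(B,C)$ is determined by $(E,P)$.

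\emph{Converse.} Conversely, given $B,C\in k$ with $D\neq 0$, the point $P=(0,0)$ lies on~\eqref{eq:tateform}. The existence argument above shows $\mathrm{ord}(P)>3$ if and only if $a_2,a_3\neq 0$, i.e.\ $B\neq 0$, which is ensured by the factor $B^3$ of~$D$. Smoothness of the associated projective curve reduces to non-vanishing of the usual Weierstrass discriminant~$\Delta$, and a direct expansion of $\Delta$ in terms of $B$ and $C$ identifies it, up to a unit factor, with the second factor of~$D$.

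The main obstacle is this last step, the explicit identification of $\Delta$ with the second factor of $D$. It is not deep, merely a somewhat lengthy polynomial computation; it is cleanest to substitute $a_1=1-C$, $a_2=a_3=-B$, $a_4=a_6=0$ into the standard formulae for $b_2,b_4,b_6,b_8$ and then for $\Delta=-b_2^2b_8-8b_4^3-27b_6^2+9b_2b_4b_6$, and expand.
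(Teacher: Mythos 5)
Your proof is correct and follows essentially the same route as the paper: translate $P$ to $(0,0)$, shear to kill $a_4$, scale to force $a_2=a_3$, and identify $D$ with the Weierstrass discriminant. You merely fill in details the paper delegates elsewhere (the explicit uniqueness computation, which the paper handles by citing Silverman's classification of admissible coordinate changes, and the expansion of $\Delta$, which the paper simply asserts), and your computations check out — in fact $\Delta$ equals the displayed $D$ exactly, not just up to a unit.
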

\begin{proof}
Given $(E, P)$,
start with a general Weierstrass equation
\begin{equation}
\label{eq:weierstrass}
Y^2 + A_1 XY + A_3 Y = X^3 + A_2 X^2 + A_4 X + A_6.
\end{equation}
As $P$ does not have order~$1$, it is affine,
and we translate $P$ to $(0,0)$ yielding $A_6=0$.
As $P$ does not have order~$2$, we have $A_3\not=0$,
and we add $(A_4/A_3)X$ to~$Y$ to get $A_4=0$.
As $P$ does not have order~$3$, we get $A_2\not=0$,
and we scale $X$ and~$Y$ to get $A_2=A_3$.
Then we define $C = 1-A_1$ and $B=-A_2=-A_3$.
This uses up all freedom for changing
Weierstrass equations~\cite[III.3.1(b)]{SilvermanAEC},
so this form is uniquely defined.
The quantity $D$ is the discriminant of~$E$,
which is non-zero.

Conversely, if $D$ is non-zero, then $(E,P)$ defines
an elliptic curve and a point on it, where the point does
not have order $1$, $2$ or $3$.
\end{proof}

For any elliptic curve $E$ given by a general Weierstrass
equation $y^2 + a_1xy+a_3y = x^3+ a_2x^2+a_4x+a_6$
and any $k\in\ZZ$,
the
\emph{$k$-division polynomial}
$\psi_{k}$
is given by
\begin{align*}
\psi_0 &= 0,
\qquad \psi_2 = 2y+a_1x+a_3,\\
\psi_{k} &= t \cdot 
\!\!\!\!\!\!\!\!\!\!\!\!
 \prod_{Q\in (E[k]\setminus E[2])/\pm}
\!\!\!\!\!\!\!\!\!\!\!\! (x-x(Q)),\qquad\mbox{where}\quad t = \left\{
\begin{array}{ll} k &  \mbox{if $2\nmid k$,}\\ \frac{k}{2}\cdot \psi_2 &\mbox{if $2\mid k$.}
\end{array}\right.
\end{align*}
For any point $P$ on $E$, we have
$k P=0$ if and only if $\psi_{k}(P)=0$.

Let $P_{k}\in\ZZ[B,C]$ be the $k$-division polynomial
$\psi_{k}$ of the elliptic curve \eqref{eq:tateform}
evaluated in the point $P=(0,0)$.
In particular, if $k\geq 4$ and $(E,P)$ corresponds
to $(B,C)\in K^2$ with $D\not=0$, then $P$ has
order dividing $k$ if and only if $P_{k}(B,C)=0$.
\begin{example}\label{ex:P}
For positive integers~$k$, we compute the $k$-division polynomial
with the SageMath~\cite{sage} command
\[
\href{https://doc.sagemath.org/html/en/reference/arithmetic_curves/sage/schemes/elliptic_curves/ell_generic.html\#sage.schemes.elliptic_curves.ell_generic.EllipticCurve_generic.division_polynomial}{\texttt{E.division\_polynomial(k, two\_torsion\_multiplicity=1)}}\]
and obtain the following list.
\begin{align*}
P_1 &= 1 & P_5 &= -(C - B) \cdot B^{8}\\
P_2 &= - B & P_6 &= - B^{12} \cdot (C^{2} -  B + C)\\
P_3 &= - B^{3} & P_7 &= B^{16} \cdot (C^{3} -  B^{2} + B C)\\
P_4 &= C \cdot B^{5} & P_8 &= C \cdot B^{21} \cdot (B C^{2} - 2 B^{2} + 3 B C -  C^{2})
\end{align*}
\end{example}

For $k\geq 4$, let $F_{k}\in\QQ[B,C]$ be $P_{k}$ with all factors
in common with $D$ and $P_{d}$ for $d<k$ removed
(well-defined up to $\QQ^*$).
Following~\cite{derickx-vanhoeij}, we let
$F_3 = B\in\ZZ[B,C]$ and $F_2 = B^4/D\in\QQ(B,C)$.
\begin{example}
\begin{align*}
F_2 &= B \cdot (C^{4} - 8 B C^{2} - 3 C^{3} + 16 B^{2} - 20 B C + 3 C^{2} + B -  C)^{-1}\\
F_3 &= B\\
F_4 &= C\\
F_5 &= C - B \\
F_6 &= C^{2} -  B + C \\
F_7 &= C^{3} -  B^{2} + B C \\
F_8 &= B C^{2} - 2 B^{2} + 3 B C -  C^{2}
\end{align*}
\end{example}

For $N\geq 4$, the point $P=(0,0)$ on $E$
is of order $N$
if and only if $F_N=0$.
In particular,
we get the following known model of $Y^1(N)$.
\begin{proposition}\label{prop:modularcurve}
	Given $N\geq 4$,
let $R = \QQ[B,C,D^{-1}]\subset \QQ(B,C)$
and let $Y = \mathrm{Spec}(R/F_N)\subset \mathrm{Spec}(R)\subset \AA^2$.
In other words, let $Y$ be the curve over $\QQ$ in the affine $B,C$-plane
given by
\[ Y : F_N = 0, D \not=0.\]
Then for all fields $K$ of characteristic~$0$,
we have $Y^1(N)(K) = Y(K)$.\qed
\end{proposition}
In fact, with a more careful analysis of the Tate normal form
and division polynomials,
one would get the following much stronger result, which we do not need for our main results,
but which we give for completeness.
\begin{proposition}[{Jin~\cite[Corollary 45]{jinbi-divpol}}]\label{prop:jin}
Let $R' = \ZZ[B,C,D^{-1},1/N]\subset R$.
The scheme $\mathrm{Spec}(R'/F_N)$ represents the
``naive'' $\Gamma_1(N)$ moduli problem of \cite{jinbi-divpol}
over $\ZZ[1/N]$.\qed
\end{proposition}

For every $k\geq 2$, the element $F_k\in \QQ[B,C]$ now coincides
with 
$F_k$ of Derickx and Van Hoeij~\cite{derickx-vanhoeij}.
It is irreducible in $\overline{\QQ}[B,C]$ for $k\geq 4$
because the curve $Y^1(k)_{\CC}$ is irreducible.

By taking $B$, $C$, $D$, $F_k$
and $P_k$ modulo $F_N$, we get modular functions
$b$, $c$, $d$, $f_k$ and $p_k$ on $Y^1(N)$
for all $k, N\in\ZZ$ with $k\geq 2$,  $N\geq 4$, and $N\nmid k$.
Derickx and Van Hoeij show (\cite[Section~2]{derickx-vanhoeij}) that they are
\emph{modular units}, that is, functions
with divisors supported at the cusps.
Let $\mathcal{O}(Y^1(N))^*\subset \QQ(X^1(N))^*$
be the group of all modular units.
Our main result is the following.
\begin{theorem}[rephrasing of Theorem~\ref{thm:mainthm} above, Conjecture 1 of \cite{derickx-vanhoeij}]
\label{thm:conjdvh}
The group $\mathcal{O}(Y^1(N))^*/\QQ^*$ is the free
abelian group on
$f_2,f_3,f_4,\ldots,f_{\lfloor N/2\rfloor +1}$.
\end{theorem}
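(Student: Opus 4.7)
The plan is to deduce Theorem~\ref{thm:conjdvh} from Theorem~\ref{thm:mainthmtwo} together with an explicit dictionary between the $f_n$'s and Siegel-function products. I would split the argument into three steps: (i) verification that each $f_n$ is a modular unit; (ii) multiplicative independence of the $f_n$'s modulo $\QQ^*$; and (iii) generation of the full unit group.

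Step (i) is already contained in the discussion preceding the theorem and in \cite{derickx-vanhoeij}. For step (ii), I would exploit that $f_n$ is defined as $P_n$ with the factors common with $D$ and with $P_d$ for $d<n$ removed. Since the zeroes of $P_n$ on $Y^1(N)(\CC)$ correspond to pairs $(E,P)$ where the point has order dividing $n$, the residual divisor of $f_n$ on $X^1(N)$ must contain cusps associated to primitive level-$n$ degeneration data that are genuinely new, not inherited from $f_m$ with $m<n$. Ordering the $f_n$'s by $n$, the resulting divisor matrix is triangular with non-zero diagonal, yielding multiplicative independence and in particular a rank lower bound of $\lfloor N/2\rfloor$ for the group they generate.

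For step (iii), I would combine the explicit Siegel-function realisation of the $f_n$'s (the subject of Section~\ref{sec:relating}) with Theorem~\ref{thm:mainthmtwo}. Specialising the classical sigma-function formula for the division polynomial at the canonical point $z_0=\tau/N$ on the Tate curve expresses the value $p_n$ as an explicit monomial in Siegel functions $h_{(k/N,0)}$, and peeling off the lower-level factors yields the same for $f_n$. Thus each $f_n$ lies in the Siegel subgroup $\QQ^*\cdot S$, which by Theorem~\ref{thm:mainthmtwo} coincides with $\mathcal{O}(Y^1(N))^*$. Together with step (ii) and the fact that $S$ is free of rank $\lfloor N/2\rfloor$, the subgroup generated by the $f_n$'s has \emph{finite} index in $\mathcal{O}(Y^1(N))^*/\QQ^*$. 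To upgrade to index $1$, I would invert the exponent matrix relating the $f_n$'s and the Siegel basis, whose upper-triangular shape (inherited from the triangularity of step~(ii)) forces unimodularity.

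The hardest part will be Theorem~\ref{thm:mainthmtwo} itself: showing that the congruence-cut-out Siegel group~$S$ already exhausts $\mathcal{O}(Y^1(N))^*/\QQ^*$. This relies on the $q$-expansion argument of Section~\ref{sec:powerseries} using Gauss's lemma for power series with bounded denominator, where one must control denominators of fractional-power $q$-expansions in $\QQ((q^{1/N}))$ after extracting $l$-th roots --- the point at which the paper refines Kubert--Lang. By comparison, the divisor triangularity in step (ii) and the unimodularity check in step (iii) are essentially combinatorial bookkeeping downstream of the analytic input.
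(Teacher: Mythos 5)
Your overall architecture matches the paper's: reduce Theorem~\ref{thm:conjdvh} to the Siegel-function statement via the sigma-function/division-polynomial dictionary of Section~\ref{sec:relating}, and isolate the $q$-expansion/Gauss-lemma argument of Section~\ref{sec:powerseries} as the real content. That identification is correct. But two pieces of your ``combinatorial bookkeeping'' do not work as stated. First, in step~(ii) the claim that the cuspidal divisor matrix of the $f_n$ is triangular with nonzero diagonal is asserted, not proved: the $f_n$ have no zeros on $Y^1(N)$ at all (that is why they are units), so their divisors live entirely at the cusps, and computing those cuspidal orders requires expanding $f_n$ at every cusp (i.e.\ the $\SL_2$-transformation of the $h_a$), which you have not done and which is not obviously triangular for any natural ordering of the cusps. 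The paper instead gets independence for free from Proposition~\ref{prop:linearlyindep} (leading terms of the reduced $q$-expansions of the $h_{(k/N,0)}$ at the single cusp $\infty$) combined with the explicit exponent vectors of Lemma~\ref{lem:express}; your step~(ii) is both unjustified and unnecessary once step~(iii) is in place.

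Second, and more seriously, the ``upper-triangular shape forces unimodularity'' claim in step~(iii) is false. The exponent lattice in $\ZZ^m$ spanned by $d$, $b$, $p_4,\dots,p_{m+1}$ (equivalently by the $f_n$) relative to the basis $h_{(1/N,0)},\dots,h_{(m/N,0)}$ is a \emph{proper} sublattice: applying the functional $e\mapsto \sum_k e(k)$ to the expressions of Lemma~\ref{lem:express} sends every $p_n$ to $0$ and $d$ to $12$, so the determinant of your matrix is divisible by $12$, not $\pm 1$. If unimodularity held, the $f_n$ would generate the full free group on the $h_{(k/N,0)}$, contradicting Theorem~\ref{thm:mainthmtwo} itself. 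What you actually need is that the $f_n$-lattice equals exactly the congruence sublattice $S$ cut out by~\eqref{eq:modulo}, and the nontrivial input there is the wrap-around identity $h_{((m+1)/N,0)}=\pm h_{((N-m-1)/N,0)}$ (Lemma~\ref{lem:propertiesofh}, parts \ref{item:propertiesofh2} and~\ref{item:propertiesofh4}), which produces the relation $p_{m+1}=t^{\gcd(2,N)N}p_{N-m-1}$ of Lemma~\ref{lem:express2}; this is precisely where the second congruence in~\eqref{eq:modulo} comes from, and the explicit inverse is Proposition~\ref{prop:hinp}. Without this step your argument cannot match the group generated by the $f_n$ with $S$, so the identification with $\mathcal{O}(Y^1(N))^*/\QQ^*$ fails at the last moment.
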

The first, small step of the proof is to rewrite the theorem
in terms of $p_{k}$ using the following lemma.
\begin{lemma}\label{lem:equivalent}
For all $k\geq 3$,
we have $\langle F_2,F_3,\ldots,F_{k}\rangle\cdot \QQ^*
= \langle B, D, P_4, P_5,\ldots, P_{k}\rangle\cdot\QQ^*$.
\end{lemma}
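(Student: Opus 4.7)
The plan is to proceed by induction on $n$. The base case $n=3$ is immediate: $F_2=B^4/D$ and $F_3=B$ give $\langle F_2,F_3\rangle\QQ^*=\langle B,D\rangle\QQ^*$, which matches the right-hand side (no $P_k$ generators appear).

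For the inductive step, assume the equality for $n-1$ and let $G$ denote the common group $\langle F_2,\ldots,F_{n-1}\rangle\QQ^*=\langle B,D,P_4,\ldots,P_{n-1}\rangle\QQ^*$. It suffices to show that $H_n:=P_n/F_n$ lies in $G$: from this, $F_n=P_n\cdot H_n^{-1}\in G\cdot\langle P_n\rangle$ and $P_n=F_n\cdot H_n\in G\cdot\langle F_n\rangle$, giving both inclusions. By the definition of $F_n$, $H_n$ is a polynomial all of whose irreducible factors divide $D\cdot P_2\cdots P_{n-1}$; what requires argument is that $H_n$ lies in the multiplicatively generated subgroup $G$, not merely in the free abelian group on all primes dividing $D\cdot P_2\cdots P_{n-1}$.

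The key step is to establish the structural factorisation
\[
P_n \;=\; c_n\,B^{a_n}\,Q^{b_n}\prod_{\substack{d\mid n\\ d\geq 4}}F_d^{m_{n,d}}
\quad\text{in }\QQ[B,C],
\]
where $Q:=D/B^3$, $c_n\in\QQ^*$, all exponents are nonnegative integers, and $m_{n,n}=1$. On the open set $\{D\neq 0\}\subset\AA^2$, the Tate form parametrises an elliptic curve with a distinguished point of order at least $4$, and $P_n$ vanishes there precisely when this point is $n$-torsion; hence the zero locus of $P_n$ inside $\{D\neq 0\}$ equals the union of the zero loci of the $F_d$ for $d\mid n$, $d\geq 4$. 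Since each such $F_d$ is irreducible (Corollary~\ref{cor:irreducible}), this accounts for the $F_d^{m_{n,d}}$ factors, and $m_{n,n}=1$ follows from the irreducibility of $F_n$ together with its defining property of sharing no factor with $D$ or any $P_d$ for $d<n$. Any further irreducible component of the zero locus of $P_n$ must lie inside that of $D$, whose components are the zero loci of $B$ and $Q$; this contributes only the $B^{a_n}$ and $Q^{b_n}$ factors.

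Granting this factorisation, the inductive step is immediate: writing $H_n=c_n B^{a_n}Q^{b_n}\prod_{d\mid n,\,4\leq d<n}F_d^{m_{n,d}}$, each factor lies in $G$, since $Q=D\cdot B^{-3}\in\langle B,D\rangle\subseteq G$, $B=F_3\in G$, and $F_d\in G$ for $4\leq d<n$ by the inductive hypothesis. I expect the main obstacle to be the structural factorisation itself, and in particular the irreducibility of $Q$ in $\QQ[B,C]$; this reduces to checking that $Q$, viewed as a quartic in $C$ over $\QQ(B)$, admits no linear factor in $\QQ(B)[C]$ and does not split as a product of two quadratics, which is a direct but non-trivial computation.
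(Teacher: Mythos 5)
Your argument is correct, and its outer shell --- induction on $n$, with the step reduced to showing that $P_n/F_n$ lies in the group $G$ already generated --- is the same as the paper's. Where you differ is in how that membership is established. The paper strengthens the induction hypothesis to ``$G_n=H_n$ \emph{and} every irreducible factor of $D$ and of $P_d$ for $d\le n$ lies in $G_n$''; the factors removed from $P_k$ to form $F_k$ are by definition common with $D$ or with some earlier $P_d$, hence already lie in $G_{k-1}$, and the single new factor $F_k$ is irreducible, which keeps the hypothesis alive. You instead prove the sharper structural fact that, up to a constant, $P_n=B^{a_n}(D/B^3)^{b_n}\prod_{d\mid n,\,d\ge4}F_d^{m_{n,d}}$, obtained from the torsion-order reading of the vanishing locus on $\{D\neq0\}$ together with the Nullstellensatz for components inside $V(D)$. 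Your lemma is stronger (it shows, for instance, that no $F_d$ with $d\nmid n$ can divide $P_n$) and is the classical factorisation of division polynomials into primitive parts, but it costs a genuine geometric argument; the paper's bookkeeping is lighter and needs nothing beyond the definitions and Corollary~\ref{cor:irreducible}. Both proofs ultimately rest on the same two irreducibility inputs: that of the $F_d$ (Corollary~\ref{cor:irreducible}) and that of $D/B^3$ in $\QQ[B,C]$. The paper asserts the latter without proof in its base case; you correctly isolate it as the one remaining computation, though your proposed route (a quartic in $C$ over $\QQ(B)$) is harder than necessary: viewed as a quadratic in $B$, the polynomial $D/B^3$ has leading coefficient $16$, constant term $C(C-1)^3$, and discriminant $(8C+1)^3$, which is not a square in $\QQ[C]$, so irreducibility follows from Gauss's lemma.
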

\begin{proof}
Let $L_{k}$ be the left hand side and $R_{k}$ the right.
We prove by induction on~$k$ that we have $L_{k}=R_{k}$
and that all irreducible factors of both $D$ and $P_d$ for $d\leq k$
are elements of~$L_{k}$.

For $k=3$, we have $F_3 = B$ and $F_2=B^4/D$
by definition, hence also $D=F_3^4/F_2^{\phantom{4}}$.
As $B$ and $D/B^3 = F_2^{-1}F_3^{\vphantom{-1}}$
are irreducible, the induction hypothesis follows for $k=3$.

Suppose now that the induction hypothesis holds for $k=n-1$.
By definition $F_n$ is $P_n$ except for factors in common with
$D$ and 
$P_d$ for $d<k$, but by the induction
hypothesis all such factors are in $L_{n-1}=R_{n-1}$.
In particular, we get $L_{n}=R_{n}$.
The polynomial $F_n$ is irreducible as mentioned below Proposition~\ref{prop:jin}, hence
the induction hypothesis also holds for $k=n$.
\end{proof}
By Lemma~\ref{lem:equivalent}, we find that
Theorem~\ref{thm:conjdvh}
is equivalent
to the following.

\begin{theorem}\label{thm:mainthmthree}
The group $\mathcal{O}(Y^1(N))^*/\QQ^*$ is 
the free abelian group on
$b$, $d$, $p_4$, $p_5$, \ldots, $p_{\lfloor N/2\rfloor +1}$.
\end{theorem}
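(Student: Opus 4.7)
The plan is to leverage Theorem~\ref{thm:mainthmtwo} together with the classical connection between division polynomials and the Weierstrass sigma function, which translates into explicit relations among Siegel functions. Write $G \subset \mathcal{O}(Y^1(N))^*$ for the subgroup generated by $\QQ^*$ and $b, d, p_4, \ldots, p_{\lfloor N/2\rfloor+1}$; we must show $G = \mathcal{O}(Y^1(N))^*$ and that the $\lfloor N/2\rfloor$ non-scalar generators are multiplicatively independent modulo $\QQ^*$.

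The first step is to verify that each proposed generator lies in $\mathcal{O}(Y^1(N))^*$. Since $P_n \in \ZZ[B,C]$, the function $p_n$ is regular on the affine curve $Y^1(N)$; and for $4 \leq n \leq \lfloor N/2\rfloor + 1$ the universal point $P$ has order exactly $N > n$, so $\psi_n(P) \neq 0$ everywhere, whence $p_n$ is a unit. That $b$ and $d$ are units is immediate from Lemma~\ref{lem:tateform}.

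The second step converts these algebraic generators into analytic expressions. Using the classical identity
\[
\psi_n(u) \;=\; (-1)^{n-1}\,\frac{\sigma_\Lambda(nu)}{\sigma_\Lambda(u)^{n^2}}
\]
with $\Lambda = \Lambda_\tau$ and $u = \tau/N$, and absorbing powers of $\eta(\tau)$ to pass to Siegel functions, one writes $\psi_n(\tau/N)$ as a monomial in the $h_{(k/N,0)}$. Tracking the specific change from a generic Weierstrass model~\eqref{eq:weierstrass} to the Tate normal form~\eqref{eq:tateform} produces corresponding explicit Siegel-function formulas for $b$, $d$, and each $p_n$. From these formulas one reads off (i) that the exponent vector $(e(k))_{k=1}^{\lfloor N/2\rfloor}$ of each generator satisfies the congruences $\sum_k e(k) \in 12\ZZ$ and $\sum_k k^2 e(k) \in \gcd(N,2)N\ZZ$ defining the set $S$ of Theorem~\ref{thm:mainthmtwo}, and (ii) that the resulting $\lfloor N/2\rfloor \times \lfloor N/2\rfloor$ exponent matrix is non-singular over $\QQ$, giving multiplicative independence. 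Together with Theorem~\ref{thm:mainthmtwo}, (i) and (ii) show $G \subseteq \QQ^* \times S = \mathcal{O}(Y^1(N))^*$ with $G/\QQ^*$ a finite-index subgroup of the full free abelian group $\mathcal{O}(Y^1(N))^*/\QQ^*$ of rank $\lfloor N/2\rfloor$.

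The main obstacle, and where the paper's Section~\ref{sec:powerseries} enters, is ruling out a nontrivial torsion cokernel. The concrete route of directly inverting the Siegel-function relations to write every element of $S$ as a monomial in $b, d, p_n$ is combinatorially delicate, because each $p_n$ mixes Siegel functions indexed by multiples of divisors of $n$ rather than by a single index. I would instead use the Kubert--Lang-style argument flagged in the introduction: for any $g \in \mathcal{O}(Y^1(N))^*$, some power $g^l$ lies in $G$ by the finite-index property; since $g$ has a rational $q$-expansion in $\QQ((q^{1/N}))$ and the Siegel-function contributions to $g^l$ have integral coefficients up to a common bounded denominator, Gauss' lemma for power series forces the $l$-th root to itself lie (up to $\QQ^*$) in the multiplicative group generated by the same Siegel building blocks, hence in $G$. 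Applied prime by prime, this shows $\mathcal{O}(Y^1(N))^*/G$ is torsion-free, hence trivial, completing the proof.
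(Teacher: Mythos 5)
Your first two steps are sound and essentially follow the paper: the generators are modular units because $P$ has exact order $N$ on the universal curve, and Proposition~\ref{prop:sigma} together with Lemmas \ref{lem:express} and~\ref{lem:express2} carries out exactly the sigma-function/Siegel-function translation you describe, including the verification that each generator's exponent vector satisfies~\eqref{eq:modulo}. The problem is in your final step. First, the premise on which you dismiss the direct inversion is false: $p_n$ does not ``mix Siegel functions indexed by multiples of divisors of $n$.'' Since $\psi_n(z)=\pm\sigma(nz)/\sigma(z)^{n^2}$ up to the Tate-normalisation factor, one gets $p_n = t^{n^2-1}\,h_{(n/N,0)}/h_{(1/N,0)}$ with $t=h_{(1/N,0)}^2h_{(3/N,0)}h_{(2/N,0)}^{-3}$, so only the indices $n,1,2,3$ occur and the system is triangular. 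The inversion is a one-line computation (Proposition~\ref{prop:hinp}), and it is precisely what shows that \emph{every} element of $S$ lies in your group~$G$.

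Second, the Gauss'-lemma argument you substitute for it cannot close the gap. That argument (Proposition~\ref{prop:integral} in the paper) controls only the \emph{integrality} of the exponents of a modular unit with respect to the basis $h_{(1/N,0)},\ldots,h_{(m/N,0)}$; it says nothing about the congruence conditions~\eqref{eq:modulo}, nor about membership in the specific exponent sublattice generated by $b,d,p_4,\ldots,p_{m+1}$. Concretely: if $g\in\mathcal{O}(Y^1(N))^*$ and $g^l\in G$, writing $g=c\prod h_{(k/N,0)}^{e(k)}$, Gauss' lemma yields $e\in\ZZ^m$, but $\ZZ^m$ modulo the exponent lattice of $G$ has nontrivial torsion (index divisible by $12\gcd(2,N)N$), so $le\in L_G$ and $e\in\ZZ^m$ do not force $e\in L_G$. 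Hence ``the $l$-th root lies in the group generated by the Siegel building blocks, hence in $G$'' is a non sequitur, and torsion-freeness of $\mathcal{O}(Y^1(N))^*/G$ does not follow. Granting Theorem~\ref{thm:mainthmtwo} as you do, what remains is exactly the lattice inclusion $L_S\subseteq L_G$, which is the explicit inversion you set aside; alternatively, without assuming Theorem~\ref{thm:mainthmtwo}, one needs the $\mathrm{SL}_2$-action argument of Theorem~\ref{thm:full} to pin down the congruences — Gauss' lemma alone does neither.
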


\begin{remark}\label{rem:recurrence}
The division polynomials~$\psi_{k}$, and hence the polynomials
$P_{k}$ and the functions~$p_{k}$, satisfy the following
recurrence relation. For all $m,n,k\in\ZZ$, we have
\begin{align*}
 \psi_{m+n}\psi_{m-n}\psi_{k}^2
&= \psi_{m+k}\psi_{m-k}\psi_{n}^2
-\psi_{n+k}\psi_{n-k}\psi_{m}^2.
\intertext{
Taking $(k,m,n) = (1,l+1,l)$ or $(1,l+1,l-1)$, 
we get}
  \psi_{2l+1}^{\vphantom{1}} &= \psi_{l+2}^{\vphantom{1}}\psi_{l}^3 - \psi_{l+1}^3\psi_{l-1}^{\vphantom{1}},\\
\psi_{2l}^{\vphantom{1}} &= \psi_{2}^{-1} \psi_{l}^{\vphantom{1}} \left(\psi_{l+2}\psi_{l-1}^2
-\psi_{l-2}^{\vphantom{1}}\psi_{l+1}^2
\right),
\end{align*}
which gives~$p_{k}$ for all $k \geq 5$ starting
from the initial terms
$p_1$, $p_2$, $p_3$, $p_4$
of Example~\ref{ex:P}.
\end{remark}

\newcommand{\computercode}[1]{}
\begin{example}
The curve $X^1(5)$ is defined by
$0 = F_5 = C-B$, that is, by $B=C$.
We compute
\computercode{
P.<c> = QQ[]
C = c
B = c
E = EllipticCurve([1-C, -B, -B, 0, 0])
def myfactor(k):
	if k == 0:
		return 0
	return k.factor()

for k in [1,2,3,4,5,6,7,8,9,10]:
    print "p_{" + str(k) + "} &=& " + latex((E.division_polynomial(k, two_torsion_multiplicity=1)(0,0)(c))) + "\\\\"
   
latex(factor(E.discriminant()))
--
}
\[
\begin{array}{lcllcl}
p_{1} &=& \phantom{-}1 & p_{6} &=& -c^{14}\\
p_{2} &=& -c & p_{7} &=& \phantom{-}c^{19}\\
p_{3} &=& -c^{3} & p_{8} &=& \phantom{-}c^{25}\\
p_{4} &=& \phantom{-}c^{6} & p_{9} &=& -c^{32}\\
p_{5} &=& \phantom{-}0 & p_{10} &=& \phantom{-}0\\
d &=& \phantom{-}c^{5} \cdot (c^{2} - 11 c - 1),
\end{array}
\]
which, except for $p_5$ and $p_{10}$,
all lie in the group generated by $b=c$ and~$d$.
\end{example}

\begin{example}
The curve $X^1(6)$ is defined by
$0=F_6=C^2-B+C$, that is, by
$B = C(C+1)$.
We compute
\computercode{
P.<c> = QQ[]
C = c
B = c*(c+1)
E = EllipticCurve([1-C, -B, -B, 0, 0])
def myfactor(k):
	if k == 0:
		return 0
	return k.factor()

for k in [1,2,3,4,5,6,7,8,9,10]:
    print "p_{" + str(k) + "} &=& " + latex(myfactor(E.division_polynomial(k, two_torsion_multiplicity=1)(0,0)(c))) + "\\\\"
   
print "d &=& " + latex(factor(E.discriminant()))
--
}
\[
\begin{array}{lcllcl}
p_{1} &=& \phantom{-}1 & p_{6} &=& \phantom{-}0\\
p_{2} &=& -  c^{\phantom{60}} \cdot (c + 1) & p_{7} &=& -  c^{20}\cdot (c + 1)^{16}\\
p_{3} &=& -  c^{3\phantom{0}} \cdot (c + 1)^{3} & p_{8} &=& - c^{26}\cdot (c + 1)^{21}\\
p_{4} &=& \phantom{-} c^{6\phantom{0}} \cdot (c + 1)^{5} & p_{9} &=& \phantom{-}c^{33}\cdot (c + 1)^{27}\\
p_{5} &=& \phantom{-}  c^{10} \cdot (c + 1)^{8} & p_{10} &=& \phantom{-}c^{41} \cdot (c + 1)^{33}\\
d &=& \phantom{-} c^{6\phantom{0}}\cdot (c + 1)^{3}\cdot (9c + 1),
\end{array}
\]
which indeed all, except for $p_6$,
lie in the group generated by $b=c(c+1)$, $d$ and $p_4$.
\end{example}

\subsection{Siegel functions}\label{sec:siegelfunctions}

This section defines the \emph{Siegel functions}
of Theorem~\ref{thm:mainthmtwo} and recalls their
transformation properties and $q$-expansions.
Our main reference for this section is Fricke~\cite{fricke}.
We start by recalling the well-known Weierstrass sigma
function and Dedekind eta function.

\subsubsection{Lattices, sigma and eta}

By a \emph{lattice}, we will always mean a discrete subgroup
$\Lambda\subset \CC$ of rank~2.
For example, for $\tau\in\HH$, we have a lattice
$\Lambda_\tau = \tau\ZZ+\ZZ$.
For $\omega_1,\omega_2\in\CC$
with $\tau=\omega_1/\omega_2 \in\HH$,
we have a lattice $\omega_1\ZZ+\omega_2\ZZ = \omega_2 \Lambda_\tau$.

We define the \emph{Weierstrass sigma function}
by
(\cite[(1) on p.258]{fricke})
\[ \sigma(z,\Lambda)
= z \prod_{
\substack{w\in\Lambda\\
\omega\not=0}}
(1-\frac{z}{w})\exp
\left(\frac{z}{w}+\frac{1}{2} (\frac{z}{w})^2\right)\]
for all $z\in\CC$ and all lattices
$\Lambda\subset\CC$.
We also define $\sigma(z,\tau)=\sigma(z, \Lambda_\tau)$.

Let $\zeta(z,\Lambda) = \frac{\frac{d}{dz}\sigma(z,\Lambda)}{\sigma(z,\Lambda)}$
be the logarithmic derivative of~$\sigma$
(\cite[(6) on p.209]{fricke}).
It is quasi-periodic in the sense that
we have
$$\zeta(z+\omega_i,\Lambda) = \zeta(z,\Lambda) + \eta_i,$$
for some $\eta_1,\eta_2\in\CC$, which we
call the \emph{basic quasi periods}
associated to $\omega_1,\omega_2$~\cite[(4) on p.196]{fricke}.
They satisfy the Legendre relation
$\omega_1\eta_2-\omega_2\eta_1 = 2\pi i$
(\cite[(6) on p.160]{fricke}).

Let $\eta$ (not to be confused with $\eta_1$ and~$\eta_2$)
be the \emph{Dedekind eta function}
$$\eta(\tau) = q^{1/24}\prod_{n=1}^{\infty} (1-q^n)
\qquad\mbox{where}\qquad
q=\exp(2\pi i \tau)
.$$

\subsubsection{Klein forms and Siegel functions}

For $a=(a_1,a_2)\in\QQ^2\setminus\ZZ^2$,
we define the \emph{Klein form} $\mathfrak{t}_a$
as a function of $\RR$-linearly
independent pairs $\omega_1,\omega_2\in\CC$
by
$$ \mathfrak{t}_a(\omega_1,\omega_2) = \exp \left(-\textstyle{\frac{1}{2}}(a_1\eta_1+a_2\eta_2)(a_1\omega_1+a_2\omega_2)\right)
\sigma\left(a_1\omega_1+a_2\omega_2, \omega_1\ZZ+\omega_2\ZZ\right).$$
There are many variants of the notation for Klein forms
in the literature. Our Klein form $\mathfrak{t}_a$ equals
$-\sigma_{gh}$
in the notation of~\cite[(6) on p.451]{fricke} where $(g/N, h/N)=a$.

Define for $a=(a_1,a_2)\in\QQ^2\setminus \ZZ^2$
the function $\mathfrak{t}_a:\HH\rightarrow \CC$
by
\begin{equation}\label{eq:frakt}
\mathfrak{t}_a(\tau) = \omega_2^{-1}\mathfrak{t}_a(\omega_1, \omega_2),
\end{equation}
for any $\omega_1,\omega_2\in\CC$ with $\omega_1/\omega_2 = \tau$.
Indeed, by \cite[(7) on p.452]{fricke}, this
depends only on $a$ and $\tau$, not on $\omega_1$
and~$\omega_2$.
Our $\mathfrak{t}_a(\tau)$ is exactly $\mathfrak{t}_a({\tau\atop 1})$ of Kubert and Lang~\cite[\S 2.1, p.27]{kubert-lang-book}.

Define
the \emph{Siegel function}
$$ h_a = 2\pi\eta^2 \mathfrak{t}_a,$$
which is $-i$ times the function $g_a$ of Kubert and Lang~\cite[\S 2.1, p.29]{kubert-lang-book}.

\begin{remark}
Our Klein forms and Siegel functions
are the same as those
in Kubert and Lang~\cite{kubert-lang-II,kubert-lang-IV}
up to multiplication by a constant and taking
fractional powers.
Kubert and Lang do not have the factor~$\frac{1}{2}$
in the exponent in the definition
of $\mathfrak{t}_{a}(\omega_1,\omega_2)$~(\cite[p.176]{kubert-lang-II}),
but this is either due to a typo in~\cite{kubert-lang-II}
or
due to different scaling conventions
on e.g.\ $\omega_i$ and/or~$\eta_i$.
Indeed, the definition as we have given it satisfies
\cite[K2 on p.177]{kubert-lang-II},
and it would not have done so without the factor~$\frac{1}{2}$.

The notation of Kubert and Lang varies a bit from paper
to paper. 
For details of the relations between the functions,
see the following equalities, where a superscript
II refers to \cite{kubert-lang-II} and
IV to \cite{kubert-lang-IV}.
Moreover, in the case of II, a positive integer~$N$
is understood to be fixed and we have
$a = (r/N, s/N)$.
Up to constant factors, we have
\[
\begin{array}{rclclcl}
\mathfrak{t}_a
& =& \mathfrak{t}_{r,s}^{\mathrm{II}}
&= &\mathfrak{t}_a^{\mathrm{IV}},
\\
{h}_a
& = & (g_{r,s}^{\mathrm{II}})^{1/(12N)}
&= & h_a^{\mathrm{IV}}
& = & \left\{\begin{array}{ll} g_a^{\mathrm{IV}} & \mbox{if $2a\not\in\ZZ^2$,}\\
 (g_a^{\mathrm{IV}})^2 & \mbox{if $2a\in\ZZ^2$.}
\end{array}\right.
\end{array}\]
\end{remark}

\begin{lemma}\label{lem:propertiesofh}
The Siegel functions $h_a$ have the following expansions and
transformation
properties for all $a=(a_1,a_2)\in\QQ^2\setminus\ZZ$.
\begin{enumerate}
\item\label{item:propertiesofh1}
Write $q^a = \exp(2\pi i a_2)\cdot q^{a_1}=\exp(2\pi i (a_1\tau+a_2))$.
If $0\leq a_1\leq \frac{1}{2}$, then we have
\begin{equation}
\label{eq:q}
h_a = c(a)q^{\frac{1}{2} (a_1^2 - a_1 + \frac{1}{6})}
(1-q^a)\prod_{n=1}^\infty (1-q^nq^a)(1-q^nq^{-a}),
\end{equation}
where $c(a) = i\exp(\pi i a_2(a_1-1))$ is a constant.
\item\label{item:propertiesofh2} $h_{-a} = -h_{a}$.
\item\label{item:propertiesofh3} $h_{(a_1+n_1, a_2+n_2)} = (-1)^{n_1n_2+n_1+n_2}
e^{-\pi i (n_1a_2-n_2a_1)} h_{(a_1,a_2)}$ for all $(n_1,n_2)\in\ZZ^2$,
\item\label{item:propertiesofh4}
$h_{(a_1+1,0)} = -h_{(a_1,0)}$.
\item\label{item:propertiesofh5} $h_{a}$ up to multiplication by roots of unity depends only
on the class of $a$ in $(\QQ^2/\ZZ^2)/\{\pm 1\}$.
\item\label{item:propertiesofh6}
For all $$M = \left(\begin{array}{cc} \alpha & \beta \\
\gamma & \delta\end{array}\right)\in\mathrm{SL}_2(\ZZ),$$
we have 
\begin{equation}\label{eq:htransformation}
h_a(M\tau)
= \epsilon(M) h_{aM}(\tau),
\end{equation}
where $\epsilon(M)\in\CC^*$ is such that
for all $\tau\in\HH$,
\begin{equation}\label{eq:etatransformation}
\eta(M\tau)^2 = \epsilon(M)(\gamma\tau+\delta)\eta(\tau)^2.
\end{equation}
\item\label{item:propertiesofh7} The function $\epsilon$ from \eqref{eq:etatransformation} satisfies
$\epsilon(M)^{12} = 1$ and
$$\epsilon\left(\left(\begin{array}{rr} 1 & 0 \\ 1 & 1\end{array}\right)\right) = \exp(2\pi i / 12)^{-1}.$$
\end{enumerate}
\end{lemma}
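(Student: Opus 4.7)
The plan is to reduce each item to classical identities for $\sigma$ and $\eta$. Parts (\ref{item:propertiesofh2}), (\ref{item:propertiesofh4}) and (\ref{item:propertiesofh5}) are cosmetic consequences of (\ref{item:propertiesofh3}): oddness of $\sigma$ together with evenness in $a$ of the exponential prefactor gives $h_{-a}=-h_a$ at once, (\ref{item:propertiesofh4}) is the case $(n_1,n_2,a_2)=(1,0,0)$ of (\ref{item:propertiesofh3}), and (\ref{item:propertiesofh5}) follows because for $a\in\QQ^2$ the phases $e^{-\pi i(n_1a_2-n_2a_1)}$ of (\ref{item:propertiesofh3}) are roots of unity.

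For (\ref{item:propertiesofh1}) I would substitute into $h_a=2\pi\eta^2\mathfrak{t}_a$ the classical product expansion
$$\sigma(z,\tau)=\frac{e^{\eta_1 z^2/2}}{2\pi i\,\eta(\tau)^3}(e^{\pi iz}-e^{-\pi iz})\prod_{n\ge 1}\frac{(1-q^n e^{2\pi iz})(1-q^n e^{-2\pi iz})}{(1-q^n)^2}$$
with $z=a_1\tau+a_2$. The prefactors outside the product combine with $\eta^2=q^{1/12}\prod(1-q^n)^2$ to leave exactly the advertised $\prod(1-q^nq^a)(1-q^nq^{-a})$; the linear piece $e^{\pi iz}-e^{-\pi iz}$ contributes $-i q^{-a_1/2}e^{-\pi ia_2}(1-q^a)$; and the Gaussian prefactor, after simplification via the Legendre relation $\tau\eta_2-\eta_1=2\pi i$, contributes $q^{(a_1^2-a_1)/2}$ times an $a$-dependent but $\tau$-independent phase. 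Assembling these pieces, and using that the $q^{1/12}$ from $\eta^2$ together with $q^{-a_1/2}$ account for the $+\tfrac16$ in the exponent, yields (\ref{eq:q}) with the constant $c(a)=i\exp(\pi ia_2(a_1-1))$.

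The core of the lemma is (\ref{item:propertiesofh3}). Iterating the sigma translation $\sigma(z+\omega_i,\Lambda)=-e^{\eta_i(z+\omega_i/2)}\sigma(z,\Lambda)$ first in direction $\omega_1$ and then in direction $\omega_2$ introduces a sign $(-1)^{n_1+n_2}$ together with a quadratic-in-$n$ exponential. Combining this with the change of the $\mathfrak{t}_a$-prefactor under $a\mapsto a+n$, direct expansion shows that after cancellation the residual exponent collapses to
$$\tfrac{1}{2}(n_1a_2-n_2a_1)(\eta_1\omega_2-\eta_2\omega_1)+\tfrac{1}{2}n_1n_2(\eta_2\omega_1-\eta_1\omega_2),$$
which the Legendre relation $\omega_1\eta_2-\omega_2\eta_1=2\pi i$ turns into $-\pi i(n_1a_2-n_2a_1)+\pi i n_1n_2$, yielding both the extra sign $(-1)^{n_1n_2}$ and the phase $e^{-\pi i(n_1a_2-n_2a_1)}$. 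This step is exactly where the factor $\tfrac12$ in our definition of $\mathfrak{t}_a$ is essential.

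For (\ref{item:propertiesofh6}) I would first note the weight $-1$ homogeneity $\mathfrak{t}_a(\lambda\omega_1,\lambda\omega_2)=\lambda\,\mathfrak{t}_a(\omega_1,\omega_2)$ coming from $\sigma(\lambda z,\lambda\Lambda)=\lambda\sigma(z,\Lambda)$ and $\eta_i\mapsto\lambda^{-1}\eta_i$. Applying it with $\lambda=\gamma\tau+\delta$ and observing that $(\alpha\tau+\beta,\gamma\tau+\delta)$ is just another $\ZZ$-basis of $\Lambda_\tau$, whose quasi-periods are $(\eta_1,\eta_2)M^T$, both the prefactor and the sigma argument rewrite in terms of $aM$ relative to $(\tau,1)$, yielding $\mathfrak{t}_a(M\tau)=(\gamma\tau+\delta)^{-1}\mathfrak{t}_{aM}(\tau)$; multiplying by $2\pi\eta^2$ and using the definition of $\epsilon(M)$ then gives (\ref{eq:htransformation}). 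For (\ref{item:propertiesofh7}), $\epsilon(M)^{12}=1$ is immediate from $\eta^{24}=\Delta$ being an honest weight-$12$ modular form on $\SL_2(\ZZ)$, and the value at $\bigl(\begin{smallmatrix}1&0\\1&1\end{smallmatrix}\bigr)$ follows by writing this matrix as $TST$ with $T=\bigl(\begin{smallmatrix}1&1\\0&1\end{smallmatrix}\bigr)$ and $S=\bigl(\begin{smallmatrix}0&-1\\1&0\end{smallmatrix}\bigr)$ and applying $\eta(\tau+1)^2=e^{\pi i/6}\eta(\tau)^2$ and $\eta(-1/\tau)^2=-i\tau\,\eta(\tau)^2$. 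The main obstacle I anticipate is the sign-and-factor bookkeeping in (\ref{item:propertiesofh3}); once that is in hand, everything else is essentially substitution.
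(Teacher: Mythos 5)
Your proof is correct in outline, but it takes a genuinely more self-contained route than the paper. The paper disposes of the three substantive items by citation: the $q$-product (\ref{item:propertiesofh1}) and the translation formula (\ref{item:propertiesofh3}) are quoted from Fricke, the basis-change identity $\mathfrak{t}_a(M(\omega_1,\omega_2)^T)=\mathfrak{t}_{aM}(\omega_1,\omega_2)$ underlying (\ref{item:propertiesofh6}) is likewise quoted, and the specific value of $\epsilon$ in (\ref{item:propertiesofh7}) is obtained by ``numerical evaluation''; only the descent from the homogeneous identity to the $\tau$-variable statement, and the easy items (\ref{item:propertiesofh2})--(\ref{item:propertiesofh5}), are argued as you do. You instead rederive the cited identities from the classical $\sigma$-theory: the theta-product expansion for (\ref{item:propertiesofh1}), iterated quasi-periodicity plus the Legendre relation for (\ref{item:propertiesofh3}), and weight~$-1$ homogeneity plus change of basis for (\ref{item:propertiesofh6}). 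This buys independence from Fricke's conventions (which the paper itself has to reconcile with Kubert--Lang's), and your evaluation of $\epsilon\bigl(\bigl(\begin{smallmatrix}1&0\\1&1\end{smallmatrix}\bigr)\bigr)$ via $TST$ and the known transformations of $\eta$ under $T$ and $S$ is cleaner and more convincing than the paper's numerical check (it does require observing that $\epsilon$ is a homomorphism, which follows from the cocycle identity for $\gamma\tau+\delta$). One caveat on (\ref{item:propertiesofh1}): in the paper's indexing $\omega_1=\tau$, $\omega_2=1$, the Gaussian prefactor in the product expansion of $\sigma(z,\Lambda_\tau)$ must carry the quasi-period $\eta_2$ attached to the period $1$, not $\eta_1$ as you wrote, and your quoted formula cannot simultaneously carry the $\eta(\tau)^{-3}$ prefactor and the $(1-q^n)^{-2}$ denominators without also including $q^{1/8}\prod(1-q^n)$; with the correct normalisation the exponent collapses to $\tfrac12 a_1 z(\eta_2\tau-\eta_1)=\pi i a_1 z$ exactly as your Legendre-relation step requires, so the slip is notational rather than structural.
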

\begin{proof}
The expansion in \eqref{item:propertiesofh1} is Fricke~\cite[(7) on p.452]{fricke},
but note that our $q$ is the square of the $q$ of Fricke.
Equivalently, the expansion is $-i$ times
Kubert and Lang's (\cite[K5 on p.178]{kubert-lang-II}
or equivalently \cite[K4 on p.29]{kubert-lang-book}).

The identity $h_{-a} = -h_{a}$ of \eqref{item:propertiesofh2} follows from the anti-symmetry of $\sigma$
as a function of~$z$.

The identity of \eqref{item:propertiesofh3} is Fricke~\cite[(4) on p.451]{fricke}.
Identity~\eqref{item:propertiesofh4} is a special case of~\eqref{item:propertiesofh3}.

Observation \eqref{item:propertiesofh5} follows immediately from \eqref{item:propertiesofh2} and~\eqref{item:propertiesofh3}.

As $\eta^{24}$ has level~$1$, if we let $\epsilon(M,\tau) = \eta(M\tau)^2 / ((\gamma\tau+\delta)\eta(\tau)^2)$, then we get $\epsilon(M,\tau)^{12}=1$, hence
$\epsilon(M,\tau)$ is independent of~$\tau$, call it~$\epsilon(M)$.
A numerical evaluation yields the example value of~\eqref{item:propertiesofh7},
so it remains to prove equality~\eqref{eq:htransformation}
in~\eqref{item:propertiesofh6}.

First, \cite[(3) on p.451]{fricke}
(equivalently \cite[K1 on p.177]{kubert-lang-II})
gives 
$$\mathfrak{t}_a(M\left({\omega_1 \atop \omega_2}\right))
= \mathfrak{t}_{aM}(\left({\omega_1 \atop \omega_2}\right)).$$
In terms of $\tau=\omega_1/\omega_2$, this reads (by \eqref{eq:frakt})
$$(\gamma\omega_1+\delta\omega_2)\mathfrak{t}_a(M\tau)
= \omega_2 \mathfrak{t}_{aM}(\tau).$$
Now multiply this equality by
$2\pi$ and~\eqref{eq:etatransformation} to get
$$(\gamma\omega_1+\delta\omega_2)h_a(M\tau)
 = (\gamma\tau+\delta)\omega_2\epsilon(M) h_{aM}(\tau),$$
which proves~\eqref{eq:htransformation}.
\end{proof}

We use the shorthand notation
\begin{equation}\label{eq:Hk2} H_k = h_{(k/N,0)},\end{equation}
which by Lemma~\ref{lem:propertiesofh}\eqref{item:propertiesofh1}
is the same as~\eqref{eq:Hk}.

\subsection{\texorpdfstring{Remarks on the difference between $\Gamma^1$ and $\Gamma_1$}{Remarks on the difference between Gamma1 and Gamma1}}

The curve that we denote by $Y^1(N)$ is often denoted $Y_1(N)$,
mostly by authors who prefer to use
the group $\Gamma_1(N)$ instead, which is defined
as in \eqref{eq:gamma1n}
with $c\equiv 0$ instead of $b\equiv 0$.
We now give two remarks for how to adapt Theorem~\ref{thm:mainthmtwo}
to that situation. We will not use these remarks in the rest of this article.

\begin{remark}\label{rem:altparam2}
	There is a complex analytic isomorphism
	\begin{align}\label{eq:altparam}
		\Gamma_1(N)\backslash \HH &\longrightarrow  Y^1(N)(\CC) \\
		\tau &\longmapsto (\CC / \Lambda_\tau, 1/N\ \mathrm{mod}\ \Lambda_\tau).\nonumber
	\end{align}
	The field of functions on $X^1(N)$ defined over $\QQ$
	with that choice of parametrization is
	the field
	of meromorphic functions on $\Gamma_1(N)\backslash\HH^*$
	whose expansion at the cusp $0$ is rational,
	that is, the functions
	in $\QQ((\exp(-2\pi i \tau^{-1})))$.
	
	The isomorphism $\Gamma_1(N)\backslash \HH\rightarrow \Gamma^1(N)\backslash \HH$
	obtained by composing the two par\-am\-et\-riz\-ations \eqref{eq:altparam} and~\eqref{eq:ourparam} is given by
	$$\begin{psmallmatrix}\phantom{-}0 & \phantom{-}1 \\ -1 & \phantom{-}0\end{psmallmatrix}:\tau\mapsto -\tau^{-1}.$$
	In particular, if one uses the
	parametrization \eqref{eq:altparam},
	then in
	Theorem~\ref{thm:mainthmtwo} one should
	replace $H_{k}(\tau)$ by $H_{k}(-1/\tau)$.
\end{remark}

\begin{remark}\label{rem:nonstandard}
	There is another
	complex analytic isomorphism,
	given by
	\begin{align}\label{eq:nonstandard}
		\Gamma_1(N)\backslash \HH &\longrightarrow  Y^1(N)(\CC) \\
		\tau &\longmapsto (\CC / \Lambda_{N\tau}, \tau\ \mathrm{mod}\ \Lambda_{N\tau}).
		\nonumber
	\end{align}
	The field of functions on $X^1(N)$ defined over $\QQ$
	with that choice of parametrization is the field
	of meromorphic functions on $\Gamma_1(N)\backslash\HH^*$
	whose expansion at the cusp $\infty$ is rational,
	that is, the function is
	in $\QQ((\exp(2\pi i \tau)))$.
	
	The isomorphism $\Gamma_1(N)\backslash \HH\rightarrow \Gamma^1(N)\backslash \HH$
	obtained by composing the two par\-am\-et\-riz\-ations
	\eqref{eq:nonstandard} and~\eqref{eq:ourparam} is given by
	$$\begin{psmallmatrix}N & 0 \\ 0 & 1\end{psmallmatrix}:\tau\mapsto N\tau.$$
	In particular, if one uses 
	the parametrization \eqref{eq:altparam},
	then in
	Theorem~\ref{thm:mainthmtwo} one should
	replace $H_{k}(\tau)$ by $H_{k}(N\tau)$,
	which is denoted by $iE_{k}(\tau)$ in \cite{yifanyang,yifanyang2}.
\end{remark}

From now on, we only use the parametrization~\eqref{eq:ourparam},
and will not use Remark
\ref{rem:altparam2} or~\ref{rem:nonstandard}.

\section{Relating the functions}
\label{sec:relating}

We now give the first part of the proof
of the main theorems: relating the groups 
given by the sets of generators
of the theorems.
We start by expressing the functions $P_{k}$
and $p_{k}$ of Section~\ref{sec:tateform}
in terms of the Weierstrass $\sigma$-function.

\subsection{The Weierstrass sigma function}
\label{sec:sigma}

To any lattice $\Lambda\subset\CC$ of rank two
and any $z\in\CC$, we associate
an elliptic curve $E$ with $E(\CC)=\CC/\Lambda$
and a point $P=(z\ \mathrm{mod}\ \Lambda)$.

The curve $E$ has a 
classical Weierstrass equation
\begin{equation}
\label{eq:classical}
W: y^2 = 4x^3 - g_2(\Lambda) x - g_3(\Lambda),
\end{equation}
where $g_2(\Lambda) = 60\sum_{\omega\in\Lambda\setminus\{0\}} \omega^{-4}$
and $g_3(\Lambda) = 140\sum_{\omega\in\Lambda\setminus\{0\}} \omega^{-6}$.
We let $\Delta = \Delta(\Lambda) = 16(g_2(\Lambda)^3-27g_3(\Lambda)^2)$
be the discriminant of the right hand side of \eqref{eq:classical}.

After putting the pair $(E,P)$ in Tate normal form,
we get $B$ and $C$ as functions in $z$ and~$\Lambda$.
In particular, we get expressions for $P_{k}$ in terms of $z$ and~$\Lambda$.
The following result gives these expressions.

\begin{proposition}\label{prop:sigma}
For any positive integer~$k$, let
\[
\Phi_{k} = \frac{\sigma(kz, \Lambda)}{\sigma(z,\Lambda)^{k^2}}\quad\mbox{and}\quad
U =\frac{\Phi_3}{\Phi_2^3}.
\]
Then we have
\[ P_{k} = U^{k^2-1}\Phi_{k}
\quad\mbox{and}\quad D = U^{12} \Delta.\]
\end{proposition}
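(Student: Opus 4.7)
The plan is to show that both claimed equalities arise from a single admissible change of Weierstrass equations between the Tate normal form $E$ and the classical form $W: y^2 = 4x^3 - g_2 x - g_3$. I will use three ingredients: (i) the classical sigma identity on $W$, which says that for $(x,y) = (\wp(z,\Lambda), \wp'(z,\Lambda))$ the $n$-division polynomial $\tilde\psi_n(x,y)$ of $W$ equals $\sigma(nz,\Lambda)/\sigma(z,\Lambda)^{n^2} = \Phi_n$; (ii) the transformation rule that under an admissible change of variables $(x,y) = (u^2 X + r,\, u^3 Y + s u^2 X + t)$ the division polynomial gets multiplied by $u^{n^2-1}$; and (iii) under the same change the discriminant gets multiplied by $u^{12}$.

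Both $W$ and $E$ are Weierstrass models of $(\CC/\Lambda,\ z\bmod\Lambda)$ (the passage to Tate normal form in Lemma~\ref{lem:tateform} is itself such an admissible change), so they are related by some change of variables with parameters $u, r, s, t \in \CC$ depending on $(z, \Lambda)$. Ingredients (i)--(iii) then yield $P_n = u^{1-n^2}\Phi_n$ and $D = u^{-12}\Delta$ at the corresponding points, so setting $U := 1/u$ matches the proposition, provided I can identify this $U$ with $\Phi_3/\Phi_2^3$.

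To pin down $u$, I would use the small cases $n=2,3$. From Example~\ref{ex:P} we have $P_2 = -B$ and $P_3 = -B^3 = P_2^3$. Combining this with $\Phi_n = u^{n^2-1}P_n$ gives
\[
\frac{\Phi_2^3}{\Phi_3} \;=\; \frac{(u^3 P_2)^3}{u^8 P_3} \;=\; \frac{u^9 P_2^3}{u^8 P_2^3} \;=\; u,
\]
so $U = 1/u = \Phi_3/\Phi_2^3$, as desired.

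The main obstacle is nailing down the precise form of ingredient (i): the classical sigma-to-division-polynomial identity appears in the literature under several conventions depending on the chosen Weierstrass form of $W$ and the scaling of $\tilde\psi_n$. The cleanest way around this is to verify the proposition directly for $n \le 3$ (using the well-known expansions $\sigma(2z)/\sigma(z)^4 = -\wp'(z)$ and the analogous formula for $\sigma(3z)/\sigma(z)^9$) and then to induct on $n$ using the recurrence from Remark~\ref{rem:recurrence}: both $(P_n)$ and the rescaled sequence $(U^{1-n^2}\Phi_n)$ satisfy the same bilinear three-term recurrence, the latter being a consequence of the classical three-term $\sigma$-relation. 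This propagates the identity to all $n \in \ZZ$ without ever having to commit to a specific convention for the intermediate object $\tilde\psi_n$.
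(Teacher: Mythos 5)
Your main route is essentially the paper's own proof: relate the Tate normal form to the classical model by an admissible change of variables with scalar $u$, use the transformation rules $P_n = v^{n^2-1}\Phi_n$ and $D=v^{12}\Delta$ for some constant $v$, and pin down $v$ via $P_3=P_2^3$ (the paper does exactly this, noting that any residual sign $(-1)^{n+1}=(-1)^{n^2-1}$ is absorbed into $v$). One small correction to your fallback induction: the doubling relation $\psi_{2l}=\psi_2^{-1}\psi_l(\psi_{l+2}\psi_{l-1}^2-\psi_{l-2}\psi_{l+1}^2)$ is vacuous at $l=2$, so $\psi_4$ is independent initial data and your base case must cover $n\le 4$, not $n\le 3$.
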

\begin{proof}
Let $\wp(z,\Lambda)$
be the Weierstrass $\wp$-function
and $\wp' = \frac{d}{dz} \wp$.
Then for any $v\in\CC$,
we get a point
$(x,y) = (\wp(v,\Lambda), \wp'(v,\Lambda))$
on~\eqref{eq:classical}.

We put the classical Weierstrass equation
$W$ in Tate normal form relative
to the point $P=(x_0,y_0) = (\wp(z,\tau), \wp'(z,\tau))$.
The transformation is of the form
$X = u^2 (x+t)$, $Y=\frac{1}{2}u^3(y+rx+s)$
with $u,r,s,t$ functions of $z$ and $\tau$,
where $X$ and $Y$ are the coordinate functions
for the Tate normal form and $x$ and $y$ are the coordinate
functions for the classical Weierstrass equation.

First, we compute the discriminant $D$ of the Tate normal form.
Completing the square to get an equation of the form ${Y'}^2=X^3+\cdots$
does not affect the discriminant or the $X$-coordinates of the
two-torsion points. 
Note that the discriminant of a
Weierstrass equation $(Y')^2 = X^3 + \cdots$
is $16$ times the discriminant of the right hand side.
Let $Q_1$, $Q_2$, $Q_3$ be the points of order $2$ on $E$.
Then
\begin{align*}
D &= 16\cdot (X(Q_1)-X(Q_2))^2\cdot (X(Q_2)-X(Q_3))^2\cdot (X(Q_3)-X(Q_1))^2\\
&= 16u^{12}\cdot (x(Q_1)-x(Q_2))^2\cdot (x(Q_2)-x(Q_3))^2\cdot (x(Q_3)-x(Q_1))^2 \\
&= u^{12}\Delta.
\end{align*}
Similarly, we have
\begin{equation}\label{eq:Pksqrt1}
P_{k} = k \sqrt{\prod_{Q\in E[k]\setminus\{0\}} (X-X(Q))}
       = u^{(k^2-1)} k \sqrt{\prod_{Q\in E[k]\setminus\{0\}} (x-x(Q))},
\end{equation}
where the square roots are is chosen to
be monic polynomials
times~$1$ or times $Y+\frac{1}{2}a_1X+\frac{1}{2}a_3$.

We use the classical identity
\begin{equation}\label{eq:Pksqrt2}
(-1)^{k+1}\ k\ \sqrt{\prod_{Q\in E[k]\setminus\{0\}} (x-x(Q))} = 
\frac{\sigma(k z, \Lambda) }{ \sigma(z,\Lambda)^{k^2}}.
\end{equation}
For a proof, see
Theorem~2.7 of De~Looij~\cite{looij-thesis}.
The factor $(-1)^{k+1}$ does not appear in~\cite{looij-thesis},
but our choice of square root differs from the choice in loc.~cit.~by
exactly that factor.
The proof in \cite{looij-thesis}
works by fixing the lattice $\Lambda$ and showing that both sides
are elliptic functions for that lattice
with the same divisor and with
equal leading terms in their power series.

Combining \eqref{eq:Pksqrt1} and \eqref{eq:Pksqrt2}, we get
\begin{equation}
P_{k} = (-u)^{k^2-1} \frac{\sigma(kz, \Lambda) }{ \sigma(z,\Lambda)^{k^2}}
= (-u)^{k^2-1} \Phi_{k},
\end{equation}
so it suffices to prove $-u = U$.

Proving $-u= U$ could be done by a lengthy computation of the Tate normal
form from~$W$. Instead, simply note
$$1 = \frac{B^3}{B^3}
= \frac{P_3}{P_2^3} = \frac{(-u)^{3^2-1}}{(-u)^{3(2^2-1)}}\frac{\Phi_3}{\Phi_2^3} = (-u)^{-1}U,$$
which finishes the proof.\end{proof}

Next, we specialize to $\Lambda=\Lambda_\tau$ and $z=\tau/N$
consistently with the identification
$\Gamma^1(N)\backslash\HH\rightarrow Y^1(N)(\CC)$
of~\eqref{eq:ourparam}.

\begin{corollary}
\label{cor:sigma}
For any integer $N\geq 4$ and any positive integer $k$ with $N\nmid k$, let
\[\phi_{k} = \frac{\sigma\!\left(\frac{k\tau}{N}, \tau\right)}{\sigma\!\left(\frac{\tau}{N},\tau\right)^{k^2}}\quad\mbox{and}\quad
u = \frac{\phi_3}{\phi_2^3}.\]
Then the following identities of meromorphic
functions hold
on $X^1(N)$:
\[
p_{k} = u^{k^2-1}\phi_{k}
\quad\mbox{and}\quad d =
(2\pi \eta^2 u)^{12}.\]
\end{corollary}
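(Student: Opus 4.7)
The plan is to derive this as a direct specialisation of Proposition~\ref{prop:sigma} along the complex-analytic family of pairs $(E_\tau, P_\tau) = (\CC/\Lambda_\tau, \tfrac{\tau}{N}\bmod\Lambda_\tau)$ provided by the uniformisation $\Gamma^1(N)\backslash \HH \cong Y^1(N)(\CC)$. Concretely, I would set $\Lambda = \Lambda_\tau$ and $z = \tau/N$ in Proposition~\ref{prop:sigma}. Under this substitution the quantities $\Phi_n$ and $U$ of the proposition become, by comparing definitions, exactly the functions $\phi_n$ and $u$ of the corollary. Since $B$, $C$, $D$, $P_n$ are polynomial expressions in the coefficients of the Tate normal form associated with $(E,P)$, and since the functions $b$, $c$, $d$, $p_n$ on $Y^1(N)$ are obtained by pulling these back via the above uniformisation, the identity $P_n = U^{n^2-1}\Phi_n$ of Proposition~\ref{prop:sigma} specialises immediately to $p_n = u^{n^2-1}\phi_n$ as meromorphic functions on $X^1(N)$.

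For the second identity, Proposition~\ref{prop:sigma} gives $D = U^{12}\Delta$, which upon specialisation reads $d = u^{12}\Delta(\Lambda_\tau)$. The remaining task is to identify $\Delta(\Lambda_\tau)$ in terms of $\eta$. For this I would invoke the classical product formula
\[
\Delta(\Lambda_\tau) \;=\; (2\pi)^{12}\,\eta(\tau)^{24},
\]
which is standard (it is derived by comparing the $q$-expansion of the modular discriminant with that of $\eta^{24}$, and can be found e.g.\ in Fricke). Substituting this into $d = u^{12}\Delta(\Lambda_\tau)$ yields $d = (2\pi\eta^2 u)^{12}$, completing the proof.

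The argument is essentially bookkeeping: the substantive content is already carried by Proposition~\ref{prop:sigma}, and the only external fact needed is the discriminant–eta identity. The one point requiring a bit of care is verifying that the specialisation is valid as an identity of meromorphic functions on $X^1(N)$ rather than merely pointwise on the open locus where $(E,P)$ has Tate normal form; this is justified because both sides are meromorphic on $X^1(N)$ and agree on the dense open subset $Y^1(N)$, which for $N\geq 4$ is precisely where the Tate normal form is defined. There is no real obstacle here beyond cleanly matching the notation of the proposition to that of the corollary.
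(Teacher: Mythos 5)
Your proposal is correct and follows exactly the paper's own proof: specialise Proposition~\ref{prop:sigma} to $\Lambda=\Lambda_\tau$, $z=\tau/N$, and invoke the classical identity $\Delta(\Lambda_\tau)=(2\pi\eta(\tau)^2)^{12}$. The extra remark about extending the identity from $Y^1(N)$ to $X^1(N)$ by density is a harmless (and reasonable) bit of added care that the paper leaves implicit.
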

\begin{proof}
Take $\Lambda=\Lambda_\tau$ and
$z=\tau/N$ in Proposition~\ref{prop:sigma},
and
use the well known equality
$\Delta(\Lambda_\tau) = (2\pi \eta(\tau)^2)^{12}$
(see \cite[(6) on p.~313]{fricke}).
\end{proof}

\subsection{\texorpdfstring{The functions $p_{k}$ in terms of the functions $H_k$}{The functions p in terms of the functions H}}
\label{sec:intermsof1}

Now that we have expressed the functions $p_{k}$
in terms of
Weierstrass~$\sigma$-functions, we use these expressions
to express the~$p_{k}$ in terms of Siegel functions.
\begin{lemma}\label{lem:express}
Let
$$ t = \frac{H_{1}^2H_{3}}{H_{2}^3}.
$$
Then for all integers $N\geq 4$ and $k\in\ZZ\setminus N\ZZ$ we have 
\[ p_{k} = t^{k^2-1} \frac{H_{k}}{H_{1}}
\quad
\mbox{and}
\quad
d = (tH_{1})^{12}.
\]
\end{lemma}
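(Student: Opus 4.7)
The plan is to substitute the definition of the Siegel function into the formulas of Corollary~\ref{cor:sigma} and verify that the exponential prefactors cancel cleanly. Concretely, for $a=(n/N,0)$, $\omega_1=\tau$, $\omega_2=1$ the definition of $\mathfrak{t}_a$ and equation~\eqref{eq:frakt} give
\[
h_{(n/N,0)}(\tau) \;=\; 2\pi\eta(\tau)^2\,\exp\!\left(-\tfrac{1}{2}\bigl(\tfrac{n}{N}\bigr)^{2}\eta_1\tau\right)\sigma\!\left(\tfrac{n\tau}{N},\tau\right),
\]
so $\sigma(n\tau/N,\tau) = (2\pi\eta^2)^{-1}\exp\!\bigl(\tfrac{n^2}{2N^2}\eta_1\tau\bigr) h_{(n/N,0)}$.

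Plugging this into the definition of $\phi_n$ from Corollary~\ref{cor:sigma}, the contributions $\frac{n^2}{2N^2}\eta_1\tau$ in the numerator and in $\sigma(\tau/N,\tau)^{n^2}$ have the \emph{same} coefficient and therefore cancel. The first step of the proof is therefore to check this cancellation and conclude
\[
\phi_n \;=\; (2\pi\eta^2)^{n^2-1}\,\frac{h_{(n/N,0)}}{h_{(1/N,0)}^{n^2}}.
\]

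The second step is purely algebraic: specialising to $n=2,3$ and forming $u=\phi_3/\phi_2^3$ gives
\[
u \;=\; \frac{1}{2\pi\eta^2}\cdot\frac{h_{(1/N,0)}^{3}\,h_{(3/N,0)}}{h_{(2/N,0)}^{3}} \;=\; \frac{t\,h_{(1/N,0)}}{2\pi\eta^2},
\]
that is, $2\pi\eta^2\,u = t\,h_{(1/N,0)}$. Raising this to the $12$th power and applying the formula $d=(2\pi\eta^2 u)^{12}$ of Corollary~\ref{cor:sigma} immediately yields the claimed identity $d=(t\,h_{(1/N,0)})^{12}$.

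The third step combines the two: from $p_n = u^{n^2-1}\phi_n$ and the above expression for $\phi_n$,
\[
p_n \;=\; (2\pi\eta^2\,u)^{n^2-1}\,\frac{h_{(n/N,0)}}{h_{(1/N,0)}^{n^2}}\;=\;(t\,h_{(1/N,0)})^{n^2-1}\frac{h_{(n/N,0)}}{h_{(1/N,0)}^{n^2}}\;=\;t^{n^2-1}\frac{h_{(n/N,0)}}{h_{(1/N,0)}},
\]
which proves the first identity. There is no deep obstacle here: the only point requiring care is the bookkeeping of the $\exp(\cdot\,\eta_1\tau)$ factor coming from the Klein form, which works out because the exponent is quadratic in~$a_1$ and the numerator and denominator of $\phi_n$ are matched in total degree $n^2$ in the parameter $1/N$.
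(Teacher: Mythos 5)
Your proof is correct and follows essentially the same route as the paper: both express $\phi_n$ as $(2\pi\eta^2)^{n^2-1}h_{(n/N,0)}/h_{(1/N,0)}^{n^2}$ via the Klein forms and then push this through the formulas of Corollary~\ref{cor:sigma}. The only difference is that you make explicit the cancellation of the quadratic exponential prefactor $\exp\bigl(\tfrac{n^2}{2N^2}\eta_1\tau\bigr)$, which the paper absorbs into the single equality $\sigma(n\tau/N,\tau)/\sigma(\tau/N,\tau)^{n^2}=\mathfrak{t}_{(n/N,0)}/\mathfrak{t}_{(1/N,0)}^{n^2}$ without comment.
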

\begin{proof}
In the notation of Corollary~\ref{cor:sigma},
we have
\begin{align*}
\phi_{k} &= \frac{\sigma(k\tau/N, \tau)}{\sigma(\tau/N,\tau)^{k^2}}
= \frac{\mathfrak{t}_{(k/N,0)}}{\mathfrak{t}_{(1/N,0)}^{k^2}}
= \frac{H_{k}}{H_{1}^{k^2}}(2\pi \eta^{2})^{k^2-1}\\
&= \frac{H_{k}}{H_{1}}\left(\frac{H_{1}}{2\pi \eta^2}\right)^{1-k^2},
\\
u &= \phi_3\phi_2^{-3}
 = t\cdot \left(\frac{H_{1}}{2\pi \eta^2}\right),\\
p_{k} &= u^{k^2-1}\phi_{k} = t^{k^2-1} \frac{H_{k}}{H_{1}},\\
d &= \left(2\pi \eta^2 u\right)^{12}
= (tH_{1})^{12},
\end{align*}
so the result follows.
\end{proof}
Let $m = \lfloor N/2\rfloor$.
Next, we express $p_{m+1}$ in terms of $H_{k}$ with
$1\leq k\leq m$ using the periodicity and symmetry of $H_{k}$ in~$k$.
\begin{lemma}\label{lem:express2}
Let $t$
be as in Lemma~\ref{lem:express}, let
$m = \lfloor N/2\rfloor$, and let $v = t^{\gcd(2,N)N}$.
Then we have 
\begin{align*}
p_{m+1} &= \left\{ \begin{array}{ll}
   v p_{m},\quad & \mbox{if $N$ is odd},\\
   v p_{m-1},\quad & \mbox{if $N$ is even}.
\end{array}\right.
\end{align*}
Moreover, each of $d, p_2, p_4,p_5,p_6,\ldots,p_{m+1}$
(including $-b=p_2$)
is of the form
$$f = \prod_{k=1}^m H_{k}^{e(k)},$$
where for every $k\in\{1,2,\ldots,m\}$ we have $e(k)\in\ZZ$, and where we have
\begin{equation}\label{eq:modulo}
\sum_{k=1}^m e(k) \in 12\ZZ\qquad\mbox{and}\qquad
\sum_{k=1}^m k^2 e(k) \in N\gcd(N,2)\ZZ.
\end{equation}
\end{lemma}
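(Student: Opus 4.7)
The plan is to derive everything by direct substitution from Lemma~\ref{lem:express}, using the symmetries of Siegel functions in Lemma~\ref{lem:propertiesofh}. The key preliminary identity is $h_{((N-k)/N,0)} = h_{(k/N,0)}$ as functions on $\mathbf{H}$: applying Lemma~\ref{lem:propertiesofh}(\ref{item:propertiesofh4}) to $a_1 = -k/N$ gives $h_{(1-k/N,0)} = -h_{(-k/N,0)}$, and then Lemma~\ref{lem:propertiesofh}(\ref{item:propertiesofh2}) absorbs the second sign. Specialising to $k=m+1$: in the odd case $N=2m+1$ one has $N-(m+1)=m$, so $h_{((m+1)/N,0)} = h_{(m/N,0)}$; in the even case $N=2m$ one has $N-(m+1)=m-1$, so $h_{((m+1)/N,0)} = h_{((m-1)/N,0)}$.

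With this in hand, the first claim is essentially immediate from Lemma~\ref{lem:express}. In the odd case,
\[
\frac{p_{m+1}}{p_m} = t^{(m+1)^2 - m^2}\cdot\frac{h_{((m+1)/N,0)}}{h_{(m/N,0)}} = t^{2m+1} = t^N = v,
\]
since $\gcd(2,N)=1$. In the even case, similarly,
\[
\frac{p_{m+1}}{p_{m-1}} = t^{(m+1)^2 - (m-1)^2} = t^{4m} = t^{2N} = v.
\]

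For the product form, expand $t^{n^2-1}$ inside the formulas $p_n = t^{n^2-1}h_{(n/N,0)}/h_{(1/N,0)}$ and $d = (th_{(1/N,0)})^{12}$. For $n \in \{4,5,\ldots,m\}$ one reads off the tuple
\[
(e(1), e(2), e(3), e(n)) = (2n^2-3,\ -3(n^2-1),\ n^2-1,\ 1),
\]
with the remaining exponents zero; direct computation then yields $\sum_k e(k) = 0$ and $\sum_k k^2 e(k) = 0$. For $n=2$ the contributions at index $2$ combine to $e(2)=-8$, leaving the same two sums equal to $0$; for $d$ the tuple is $(36,-36,12)$, giving sums $12$ and $0$. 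For $n=m+1$, after the symmetry above rewrites $h_{((m+1)/N,0)}$ in terms of $h_{(m/N,0)}$ or $h_{((m-1)/N,0)}$, the sum $\sum_k e(k)$ stays $0$, whereas $\sum_k k^2 e(k)$ acquires the correction $m^2 - (m+1)^2 = -N$ (odd case) or $(m-1)^2-(m+1)^2 = -2N$ (even case), in each case an element of $N\gcd(N,2)\ZZ$.

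The main thing to watch will be the bookkeeping for small $N$ (namely $N\in\{4,5\}$), where some of the fixed indices $\{1,2,3\}$ from the definition of $t$, or the reduced index $m$ or $m-1$ appearing for $p_{m+1}$, collide with $n$ or exceed $m$. One deals with this by systematically replacing every factor $h_{(k/N,0)}$ with $k>m$ by $h_{((N-k)/N,0)}$ using the symmetry established above. The congruence $\sum_k k^2 e(k) \bmod N\gcd(N,2)$ is preserved because $k^2 - (N-k)^2 = N(2k-N)$ is divisible by $N$ for all $N$ and by $2N$ when $N$ is even (since $2k-N$ is then even), while $\sum_k e(k)$ is obviously unaffected by such folding.
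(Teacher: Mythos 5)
Your proof is correct and follows essentially the same route as the paper: reduce $h_{((m+1)/N,0)}$ via the symmetries $h_{-a}=-h_a$ and $h_{(a_1+1,0)}=-h_{(a_1,0)}$, then verify \eqref{eq:modulo} by direct bookkeeping of the exponent pairs $(\sum_k e(k),\sum_k k^2 e(k))$ contributed by $h_{(k/N,0)}$, $t$, $d$ and $v$. Your explicit treatment of the folding $h_{(k/N,0)}\mapsto h_{((N-k)/N,0)}$ for indices exceeding $m$ (relevant for small $N$) is a detail the paper leaves implicit, and your verification that it preserves the congruences is correct.
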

\begin{proof}
Suppose first that $N$ is odd, so $N = 2m+1$.
Lemma~\ref{lem:express} gives
$$p_{m+1} = t^{(m+1)^2-1} H_{m+1}/H_{1}$$
and by Lemma~\ref{lem:propertiesofh} (parts \eqref{item:propertiesofh2} and~\eqref{item:propertiesofh4}), we have
$H_{m+1} = -H_{-(m+1)} = H_{m}$, hence
$$p_{m+1} = t^{2m+1} t^{m^2-1} H_{m}/H_{1}=v p_m.$$

If $N$ is even, then $N = 2m$ and $t^{(m+1)^2-1} = t^{4m} t^{(m-1)^2-1}$,
so the same calculation gives $p_{m+1} = v p_{m-1}$.

A straightforward calculation verifies~\eqref{eq:modulo}
for each expression in Lemma \ref{lem:express}
or~\ref{lem:express2}.
Indeed, the value of $(\sum_{k} e(k), \sum_k k^2e(k))\in\ZZ^2$
is \begin{align*}
(1, n^2) \qquad&\mbox{for $H_{n}$ for all $n\in\ZZ$ with $N\nmid n$},\\
(0, -1) \qquad &\mbox{for $t$},\\
(12, 0) \qquad&\mbox{for $d$},\\
(0, 0) \qquad&\mbox{for $p_{n}$ with $1 \leq n\leq m$ (Lemma~\ref{lem:express})},\\
(0, -\gcd(N,2)N)\qquad &\mbox{for $v$ and hence for $p_{m+1}$ (Lemma~\ref{lem:express2})}.\qedhere
\end{align*}
\end{proof}

\subsection{\texorpdfstring{The functions $H_k$ in terms of $p_{k}$}{The functions H in terms of p}}
\label{sec:intermsof2}

Now that we have expressions of $p_{k}$ in terms of $H_{k}$,
it is a matter of solving a system of linear
equations to obtain
the reverse expressions.
These expressions are given in the following result.
\begin{proposition}\label{prop:hinp}
Let $m = \lfloor N/2\rfloor$.
Given $e\in\ZZ^m$ satisfying~\eqref{eq:modulo} and
given $$f = \prod_{k=1}^m H_{k}^{e(k)},$$
let $\alpha = \frac{1}{12}\sum_k e(k)$
and
$\beta = (N\gcd(2,N))^{-1}\sum_k k^2 e(k)$.
Then we have
\begin{equation}
\label{eq:hinp}
 f = d^\alpha \left(p_{N-m-1}^{\vphantom{-1}} p_{m+1}^{-1}\right)^{\beta} \prod_{k=1}^{m} p_{k}^{e(k)},
\end{equation}
where $p_1=1$, $p_2 = -b$, $p_3=-b^3$,
and $N-m-1\in\{m-1,m\}$,
so $$f\in \langle -b, d, p_4, p_5, \ldots, p_{m+1}\rangle
\subset \mathcal{O}(Y^1(N))^*.$$
\end{proposition}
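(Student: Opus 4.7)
The plan is a direct computational verification using Lemmas~\ref{lem:express} and~\ref{lem:express2}. I would first express $\prod_{k=1}^m p_k^{e(k)}$ in terms of $t$, $h_{(1/N,0)}$, and $f$, then eliminate the $h_{(1/N,0)}$-factor using $d^\alpha$, and finally absorb the residual power of $t$ using the relation between $v$ and $p_{m+1}/p_{N-m-1}$.

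More precisely, Lemma~\ref{lem:express} gives $p_k = t^{k^2-1}h_{(k/N,0)}/h_{(1/N,0)}$ (valid also for $k=1$, since $p_1=1$), so taking the product,
\[
\prod_{k=1}^m p_k^{e(k)} = t^{\sum_k (k^2-1)e(k)}\, h_{(1/N,0)}^{-\sum_k e(k)}\, f.
\]
Since $d = (t h_{(1/N,0)})^{12}$ by Lemma~\ref{lem:express}, and $\alpha = \tfrac{1}{12}\sum_k e(k)\in\ZZ$ by hypothesis, we have
\[
d^\alpha \prod_{k=1}^m p_k^{e(k)} = t^{\sum_k k^2 e(k)} f.
\]
It remains to show that the factor $t^{\sum_k k^2 e(k)}$ equals $(p_{m+1}/p_{N-m-1})^{\beta}$. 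By Lemma~\ref{lem:express2} we have $v = t^{\gcd(2,N)N} = p_{m+1}/p_{m}$ if $N$ is odd and $v = p_{m+1}/p_{m-1}$ if $N$ is even; in both cases $N-m-1$ equals the lower index, so $v = p_{m+1}/p_{N-m-1}$. Raising to the power $\beta = (N\gcd(2,N))^{-1}\sum_k k^2 e(k)\in\ZZ$ (an integer by the second integrality hypothesis) yields
\[
t^{\sum_k k^2 e(k)} = (p_{m+1}/p_{N-m-1})^{\beta},
\]
and rearranging gives exactly identity~\eqref{eq:hinp}.

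Finally, to justify the containment $f\in\langle -b,d,p_4,\ldots,p_{m+1}\rangle$, I would observe that the factors $p_1=1$, $p_2=-b$, $p_3=-b^3$ lie in $\langle -b\rangle$, and that $p_{N-m-1}$ is one of $p_1,\ldots,p_m$, hence either $1$, a power of $-b$, or one of the generators $p_4,\ldots,p_m$. The main subtlety in the argument is the parity split for $N$ in identifying $v$ with $p_{m+1}/p_{N-m-1}$; everything else is bookkeeping with the two integrality conditions~\eqref{eq:modulo}, which are exactly what is needed to make $\alpha$ and $\beta$ integers so that the expression on the right-hand side of~\eqref{eq:hinp} is well-defined in the group (rather than only up to roots of unity).
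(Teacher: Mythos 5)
Your proof is correct and follows essentially the same route as the paper: apply Lemma~\ref{lem:express} termwise to $\prod_k p_k^{e(k)}$, cancel the $h_{(1/N,0)}$-contribution against $d^\alpha$, and identify the leftover $t^{\sum_k k^2 e(k)}$ with $v^\beta=(p_{m+1}/p_{N-m-1})^\beta$ via Lemma~\ref{lem:express2}. The extra details you supply (the case check that $N-m-1$ is $m$ or $m-1$, and the role of~\eqref{eq:modulo} in making $\alpha,\beta$ integers) are exactly the bookkeeping the paper leaves implicit.
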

\begin{proof}
Note that 
Lemma~\ref{lem:express}
gives
\[\prod_{k=1}^{m} p_{k}^{e(k)}
= t^{\sum_{k} k^2 e(k)^2} (t H_{1})^{-\sum e(k)}\prod_{k=1}^{m} H_{k}^{e(k)}
= v^\beta d^{-\alpha}\prod_{k=1}^{m} H_{k}^{e(k)}
.\]
As Lemma~\ref{lem:express2} gives $v = p_{m+1}^{\phantom{-1}}p_{N-m-1}^{-1}$,
this proves~\eqref{eq:hinp}.
The formulas for $p_1$, $p_2$ and $p_3$ are
in Example~\ref{ex:P}.
\end{proof}
The following result sums up in how far we have now
proven the main theorems.
\begin{proposition}\label{prop:express}
Let $S$ be the group of functions
of the form $\prod_{k=1}^m H_{k}^{e(k)}$
satisfying~\eqref{eq:modulo}.
If $S$ has rank~$m$
and $\QQ^*\cdot S$ contains
$\mathcal{O}(Y^1(N))^*$, then all of
Theorems~\ref{thm:mainthm}, \ref{thm:mainthmtwo},
\ref{thm:conjdvh}
and~\ref{thm:mainthmthree} hold.
\end{proposition}
\begin{proof}
	Let $T = \langle -b, d, p_4,p_5,\ldots, p_{m+1}\rangle\subset \mathcal{O}(Y^1(N))^*$.
	Lemma~\ref{lem:express} and Proposition~\ref{prop:hinp}
	show $S = T$, hence also $\QQ^*\cdot S \subset \mathcal{O}(Y^1(N))^*$.
	
	The leading coefficients of the $q$-expansions of the functions $H_{k}(\tau)$
	are all~$i$ by~\eqref{eq:Hk}, hence the leading coefficients of $q$-expansions of
	the elements of $S$
	are all~$1$, so $S\cap \QQ^* = 1$.
	In particular, the rank of $(\QQ^*\cdot S)/\QQ^*$ equals the rank of~$S$.
	
	Under the assumption that this rank is $m$ and that $\QQ^*\cdot S$ contains
	$\mathcal{O}(Y^1(N))^*$, we get exactly Theorems \ref{thm:mainthmtwo}
	and~\ref{thm:mainthmthree}.
	
	By Lemma~\ref{lem:equivalent}, Theorem~\ref{thm:mainthmthree} implies Theorems \ref{thm:mainthm} and~\ref{thm:conjdvh}.
\end{proof}

\section{\texorpdfstring{$q$-expansions and Gauss' Lemma}{q-expansions and Gauss' Lemma}}
\label{sec:powerseries}

Recall that $S$ is the group of functions
of the form $\prod_{k=1}^m H_{k}^{e(k)}$
satisfying~\eqref{eq:modulo},
where $m=\lfloor N/2\rfloor$.
As stated in Proposition~\ref{prop:express},
it now suffices to prove that $S$ has rank~$m$
and $\mathcal{O}(Y^1(N))^*\subset \QQ^*\cdot S$.

Section~\ref{sec:rank} uses $q$-expansions to show that
the Siegel functions $H_{k}$ for $k=1,2,\ldots, m$
are multiplicatively independent. The group they generate then has
the correct rank.

Section~\ref{sec:gauss} combines this with
Gauss' Lemma for power series
to show that $\mathcal{O}(Y^1(N))^*$
is contained
in $\QQ^*\cdot \langle H_1,H_2,\ldots, H_{m-2},H_{m-1}, H_m^{1/2}\rangle$.

Section~\ref{sec:proof}
then uses explicit $\SL_2$-actions to find restrictions on the exponent vectors,
finishing the proof of $\mathcal{O}(Y^1(N))^*\subset \QQ^*\cdot S$.

\subsection{The rank}
\label{sec:rank}

\begin{proposition}
\label{prop:linearlyindep}\label{prop:defreducedform}
The functions $H_{k}$ for $k=1,2,\ldots,m$
are multiplicatively independent modulo~$\CC^*$.
In other words, if
$$\prod_{k=1}^{m} H_{k}^{e(k)}\in\CC^*$$
with $e\in\ZZ^k$, 
then $e=0$.
\end{proposition}
\begin{proof}
We prove the result
using $q$-expansions.
Following \cite{kubert-lang-IV}, we define the
\emph{reduced form} $f^*$ of a non-zero Laurent series $f$
to be $f$ divided by its lowest-degree term,
so $f^* = 1 + \mbox{higher order terms}$.

From~\eqref{eq:Hk}, we have for $0<k\leq N/2$:
\begin{align}\nonumber
H_{k}^* &= (1-q^{k/N})\prod_{n=1}^\infty (1-q^{n+k/N})(1-q^{n-k/N})\\
&= \left\{ \begin{array}{ll}
1 - q^{k/N} + O(q^{1-k/N})&\qquad\mbox{if $0<k<N/2$, and}\\
 1 - 2q^{1/2} + O(q^{3/2})&\qquad\mbox{if $k=N/2$}.\end{array}\right.
 \label{eq:qreduced}
\end{align}

Suppose that we have
$\prod_{k=1}^m H_{k}^{e(k)}\in\CC^*$
for some $0\not=e\in\ZZ^m$.
Let $k_0$ be the smallest positive
integer with $e(k_0)\not=0$.
Then \eqref{eq:qreduced} gives
\begin{equation}\label{eq:qreducedproduct}
	1 = \prod_{k=k_0}^m (H_{k}^*)^{e(k)}
= \left\{
\begin{array}{ll}
1 - e(k_0)q^{k_0/N} + O(q^{(k_0+1)/N})
&
\mbox{if $2k_0\not=N$, and},\\
1 - 2e(k_0)q^{k_0/N} + O(q^{(k_0+1)/N})
&
\mbox{if $2k_0=N$.}
\end{array}\right.
\end{equation}
We get $e(k_0)=0$, contradiction.
\end{proof}

\begin{corollary}\label{cor:finiteindex}
Let $S$ be the group of functions
$\prod H_{k}^{e(k)}$ satisfying~\eqref{eq:modulo}.
Then the image of $S$ in $\mathcal{O}(Y^1(N))^*/\QQ^*$
has finite index.
\end{corollary}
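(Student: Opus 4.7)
The plan is to prove that both $S$ (viewed modulo $\QQ^*$) and $\mathcal{O}(Y^1(N))^*/\QQ^*$ are finitely generated abelian groups of rank~$m$; once this is established, the image of $S$ is a rank-$m$ subgroup of a rank-$m$ finitely generated abelian group, and hence automatically has finite index.

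For the rank of $S$, I would use Proposition~\ref{prop:linearlyindep} directly. That proposition asserts that the $m$ Siegel functions $h_{(k/N,0)}$ for $k=1,\ldots,m$ are multiplicatively independent modulo $\CC^*$; in fact its $q$-expansion argument shows the stronger statement that $S\cap \CC^*=\{1\}$, since any constant product $\prod h_{(k/N,0)}^{e(k)}\in\CC^*$ would force all $e(k)=0$ upon examining the lowest $q$-term (at the smallest $k$ with $e(k)\neq 0$). The conditions in~\eqref{eq:modulo} define the exponent vectors of $S$ as the kernel of two homomorphisms from $\ZZ^m$ to the finite groups $\ZZ/12\ZZ$ and $\ZZ/\gcd(N,2)N\ZZ$ respectively, so $S$ is (via exponent vectors) a finite-index sublattice of $\ZZ^m$ and is free abelian of rank~$m$.

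For the upper bound on the rank of $\mathcal{O}(Y^1(N))^*/\QQ^*$, I would use the divisor map. Every modular unit has its divisor supported on the cusps of $X^1(N)$, and since the units are defined over $\QQ$ their divisors are $\mathrm{Gal}(\overline{\QQ}/\QQ)$-invariant. The kernel of the divisor map on $\mathcal{O}(Y^1(N))^*$ is exactly the constants $\QQ^*$, so $\mathcal{O}(Y^1(N))^*/\QQ^*$ embeds into the free abelian group of degree-zero $\QQ$-rational divisors on the cusps of $X^1(N)$. This ambient group has rank $c-1$, where $c$ is the number of Galois orbits of cusps on $X^1(N)$ over $\QQ$. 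The main obstacle is thus the cusp count $c\leq m+1$; I would establish it by enumerating the $\Gamma^1(N)$-orbits on $\PP^1(\QQ)$ and analyzing the Galois action induced by the rationality convention on $q$-expansions at $\infty$, finding that the orbits are naturally indexed (up to $\pm 1$) by $k\in\{0,1,\ldots,m\}$. Together with the previous paragraph, this forces both ranks to equal~$m$ and finite index follows.
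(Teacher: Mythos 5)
Your argument is correct and is essentially the paper's: the rank of $S$ is $m$ by Proposition~\ref{prop:linearlyindep} (plus the observation that \eqref{eq:modulo} cuts out a finite-index sublattice of $\ZZ^m$), and the divisor map bounds $\mathrm{rk}(\mathcal{O}(Y^1(N))^*/\QQ^*)$ by the number of Galois orbits of cusps minus one, which equals $m$. The only difference is that the paper simply cites \cite[footnote~4 on p.~4]{derickx-vanhoeij} for the cusp count $c=m+1$, whereas you propose to verify it by enumerating $\Gamma^1(N)$-orbits and the Galois action --- that enumeration is the one ingredient you sketch rather than carry out.
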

\begin{proof}
Proposition~\ref{prop:linearlyindep}
shows that $S$ has rank~$m$.
We have
$$\mathrm{rk}(\mathcal{O}(Y^1(N))^*/\QQ^*)
\leq \#\left(\{\mbox{cusps of $X^1(N)$}\} /
\mathrm{Gal}(\overline{\QQ}/\QQ)\right) - 1.$$
As the right hand side is $m$ by \cite[Definition~1 in \S2]{derickx-vanhoeij},
this proves the result.
\end{proof}
We recover the following 
consequence
of the Manin-Drinfeld
theorem~\cite{manin,drinfeld}, which
states that the cuspidal parts
of modular Jacobians are torsion.
\begin{corollary}
The group $$
\frac{\mathrm{Div}^{0,\mathrm{cusp}}(X^1(N))}{\mathcal{O}(Y^1(N))^*/\QQ^*}$$
of cuspidal divisor classes of $X^1(N)$ is finite.
\end{corollary}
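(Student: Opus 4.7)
The plan is to exhibit a natural injection
\[
\iota : \mathcal{O}(Y^1(N))^*/\QQ^* \hookrightarrow \mathrm{Div}^{0,\mathrm{cusp}}(X^1(N))
\]
given by $f \mapsto \mathrm{div}(f)$, and then conclude by comparing ranks. The map is well defined because a unit on $Y^1(N) = X^1(N) \setminus \{\text{cusps}\}$ has its zeros and poles only at cusps, and the total degree is zero on the projective curve $X^1(N)$. Injectivity modulo $\QQ^*$ holds because any $f$ in the kernel of $\mathrm{div}$ is a global regular function on the smooth projective geometrically integral curve $X^1(N)$ over $\QQ$, hence a constant in $\QQ^*$.

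Next I would read off both ranks. The target $\mathrm{Div}^{0,\mathrm{cusp}}(X^1(N))$ is free abelian on Galois-orbits of cusps modulo the degree-zero relation, so it has rank equal to $(\#\text{Galois orbits of cusps}) - 1$, which is $m = \lfloor N/2\rfloor$ by the citation used in the proof of Corollary~\ref{cor:finiteindex}. The source $\mathcal{O}(Y^1(N))^*/\QQ^*$ also has rank $m$, again by Corollary~\ref{cor:finiteindex} (combined with Proposition~\ref{prop:linearlyindep}, which gave a rank-$m$ subgroup $S$ inside it).

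Since $\iota$ is an injection between finitely generated abelian groups of equal rank $m$, the cokernel
\[
\frac{\mathrm{Div}^{0,\mathrm{cusp}}(X^1(N))}{\iota\bigl(\mathcal{O}(Y^1(N))^*/\QQ^*\bigr)}
\]
is a finitely generated torsion abelian group, hence finite. This is exactly the quotient appearing in the statement, so we are done. There is no real obstacle here: the work has already been absorbed into identifying both ranks with $m$ in Corollary~\ref{cor:finiteindex}; the present corollary is the clean packaging of that equality of ranks as a finiteness statement, recovering the cuspidal-torsion content of Manin--Drinfeld in this setting.
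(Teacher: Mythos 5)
Your proposal is correct and matches the paper's argument: the paper's (very terse) proof likewise identifies $\mathcal{O}(Y^1(N))^*/\QQ^*$ with its image under $\mathrm{div}$ and observes that both groups have rank $m$, as established in the proof of Corollary~\ref{cor:finiteindex}. You merely spell out the injectivity of $\mathrm{div}$ modulo constants and the rank count for the degree-zero cuspidal divisor group, which the paper leaves implicit.
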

\begin{proof}
As seen in the proof of Corollary~\ref{cor:finiteindex},
the two groups in the quotient 
both have rank~$m$.
\end{proof}

\subsection{Roots of power series}
\label{sec:gauss}

In Corollary~\ref{cor:finiteindex}, we have shown that every
$f\in\mathcal{O}(Y^1(N))^*$
can be expressed as a product of powers
$c \prod_{k=1}^{m}H_{k}^{e(k)}$
with $e\in \QQ^m$, $c\in\CC^*$ and $m = \lfloor N/2\rfloor$.
The current section is devoted to proving
that the exponent
$e(k)$ is an integer for $k\not=N/2$.
The key idea, taken from Kubert and Lang~\cite{kubert-lang-IV}
is to combine Gauss' lemma for power series with the
fact that $q$-expansions of modular forms have
bounded denominators.

We call a power series $f\in\ZZ[[x]]$ \emph{primitive}
if the ideal generated by its coefficients
is~$(1)$. We then have the following variant of
Gauss' lemma.
\begin{lemma}
\label{lem:gauss}
Let $f, g\in\ZZ[[x]]$ be primitive power series.
Then $fg\in\ZZ[[x]]$ is also primitive.
\end{lemma}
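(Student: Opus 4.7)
The plan is to adapt the classical proof of Gauss' lemma for polynomials, where the only adjustment needed is to check that the key integral domain argument still works for power series rather than polynomials. So I would argue by contradiction and pass to reduction modulo a prime.

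More precisely, suppose for contradiction that $fg$ is not primitive. Then the ideal generated by its coefficients is contained in some maximal ideal of $\ZZ$, hence there is a prime $p$ such that $p$ divides every coefficient of $fg$. Consider the reduction map $\ZZ[[x]] \to \mathbb{F}_p[[x]]$ obtained by reducing each coefficient modulo $p$; call the images $\bar f$, $\bar g$, and $\overline{fg}$. This map is a ring homomorphism, so $\overline{fg} = \bar f \bar g$, and by our assumption $\overline{fg} = 0$.

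On the other hand, because $f$ is primitive the ideal generated by its coefficients is all of $\ZZ$, so in particular $p$ does not divide every coefficient of $f$; hence $\bar f \neq 0$ in $\mathbb{F}_p[[x]]$, and similarly $\bar g \neq 0$. But $\mathbb{F}_p$ is a field, so $\mathbb{F}_p[[x]]$ is an integral domain (one can see this by comparing lowest-degree terms: if $\bar f = \alpha x^r + \cdots$ and $\bar g = \beta x^s + \cdots$ with $\alpha, \beta \neq 0$, then $\bar f \bar g = \alpha\beta x^{r+s} + \cdots$ with $\alpha\beta \neq 0$). This contradicts $\bar f \bar g = 0$, completing the proof.

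There is essentially no obstacle here: the whole argument is just the classical Gauss lemma, and the only thing to verify carefully is that $\mathbb{F}_p[[x]]$ remains an integral domain, which is immediate from the lowest-degree-term computation. The power series setting makes no real difference because primitivity only asks that \emph{some} coefficient is coprime to any given prime, and the reduction homomorphism behaves identically on polynomials and power series.
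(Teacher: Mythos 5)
Your proposal is correct and is essentially the paper's own argument: the paper also reduces modulo each prime $p$ and multiplies the lowest-order nonzero terms of $f$ and $g$ in $\mathbf{F}_p[[x]]$, which is exactly the integral-domain computation you spell out. The only difference is presentational (you phrase it as a contradiction, the paper as a direct verification for every prime).
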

\begin{proof}
Given any prime number~$p$,
take the lowest-order terms of $(f\ \mathrm{mod}\ p)$
and $(g\ \mathrm{mod}\ p)$ (which exist by primitivity). Their product
is a non-zero term of $(fg\ \mathrm{mod}\ p)$, so $p\nmid fg$.
\end{proof}
We say that a Laurent series $f\in\QQ((x))$
has \emph{bounded denominators}
if there is a non-zero $d\in\ZZ$ such that
$df\in\ZZ((x))$.
\begin{corollary}\label{cor:gauss}
Let $f, g\in\QQ[[x]]$ be power series
with
bounded denominators and constant term~$1$.
If $fg$ is in $\ZZ[[x]]$,
then $f, g\in\ZZ[[x]]$.
\end{corollary}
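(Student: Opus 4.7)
My plan is to imitate the classical Gauss-lemma argument for polynomials, translated to power series via Lemma~\ref{lem:gauss}. The idea is to clear the \emph{minimal} possible denominator from each of $f$ and $g$, observe that the resulting integral power series are automatically primitive by that minimality, and then read off a divisibility constraint from $fg \in \ZZ[[x]]$ that forces both denominators to equal~$1$.

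Concretely, I would use the bounded-denominator hypothesis to choose positive integers $a, b$ that are minimal with the property that $F := af$ and $G := bg$ lie in $\ZZ[[x]]$; such minima exist because the set of allowed denominators is a nonempty set of positive integers. I then claim $F$ is primitive: if some prime $p$ divided every coefficient of $F$, then $(a/p)\,f = F/p$ would already lie in $\ZZ[[x]]$, contradicting the minimality of $a$. The same argument shows $G$ is primitive.

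Applying Lemma~\ref{lem:gauss}, the product $FG = ab \cdot (fg)$ is primitive in $\ZZ[[x]]$. On the other hand, the hypothesis $fg \in \ZZ[[x]]$ says every coefficient of $FG$ is divisible by $ab$. The only way a primitive series can be divisible by $ab$ is if $ab = 1$, so $a = b = 1$ and $f, g \in \ZZ[[x]]$.

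I do not foresee a genuine obstacle: the whole statement is essentially Lemma~\ref{lem:gauss} packaged with minimality. The constant-term-$1$ hypothesis is used only to ensure $f$ and $g$ are nonzero, so that the minimal denominators $a, b$ are well-defined positive integers; the bounded-denominator hypothesis is what guarantees such $a, b$ exist at all (without it, the infimum of allowed denominators need not be attained, and indeed need not exist in $\ZZ_{>0}$).
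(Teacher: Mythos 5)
Your proof is correct and takes essentially the same approach as the paper's: clear denominators to obtain primitive series in $\ZZ[[x]]$, apply Lemma~\ref{lem:gauss} to their product $ab\cdot(fg)$, and conclude $ab=1$. One small correction to your closing remark: the constant-term-$1$ hypothesis does more than ensure $f,g\neq 0$ --- it is exactly what makes $p\mid a$ in your primitivity argument (since $p$ divides the constant coefficient of $F$, which equals $a$), and without it the minimal denominator-clearing multiple need not be primitive (e.g.\ $f=2+2x$ has minimal $a=1$ but is not primitive).
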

\begin{proof}
Take $a,b\in\ZZ$ such that $af$ and $bg$ are primitive
in $\ZZ[[x]]$.
Then $(ab)(fg)$ is primitive by Lemma~\ref{lem:gauss},
hence $a,b\in\{\pm 1\}$.
\end{proof}
\begin{proposition}[Special case of Lemma 3.1 of
Kubert and Lang~\cite{kubert-lang-IV}]
\label{prop:qexpbounded}
Let $f$ be a modular unit with rational $q$-expansion,
that is, in $\QQ((q^{1/M}))$ for some~$M$.
Then the $q$-expansion has bounded denominators.
\end{proposition}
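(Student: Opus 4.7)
The plan is to clear the poles of $f$ by multiplying by a high power of the modular discriminant, obtaining a genuine holomorphic modular form defined over~$\QQ$, and then to exploit the integral structure on spaces of modular forms to deduce bounded denominators. Recall the modular discriminant $\Delta(\tau) = q\prod_{n\geq 1}(1-q^n)^{24}$, a weight-$12$ cusp form for $\SL_2(\ZZ)$ with $q$-expansion in $\ZZ[[q]]$ that vanishes to positive order at every cusp of $X^1(N)$. Since $f$ is a modular unit, its divisor on $X^1(N)$ is supported on the cusps. Hence for $k\in\ZZ_{>0}$ sufficiently large, the product $g := f\cdot \Delta^k$ is holomorphic on $\HH$ and at every cusp, so $g$ is a modular form of weight~$12k$ for $\Gamma^1(N)$, defined over~$\QQ$.

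The key input is that the $\QQ$-vector space $M_{12k}(\Gamma^1(N),\QQ)$ of such forms admits a basis whose $q$-expansions at~$\infty$ lie in $\ZZ[[q^{1/N}]]$. This is a standard consequence of the existence of a flat $\ZZ[1/N]$-model of $X^1(N)$ together with the $q$-expansion principle; alternatively, one may exhibit such a basis by taking sufficiently many products of Eisenstein series together with $E_4,E_6,\Delta$. Once granted, every element of $M_{12k}(\Gamma^1(N),\QQ)$ has bounded-denominator $q$-expansion, so in particular $g$ does.

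Finally, to recover $f = g\cdot \Delta^{-k}$, I observe that $\prod_{n\geq 1}(1-q^n)^{-24}$ is the generating function of a non-negative integer sequence (weighted partitions), and therefore $\Delta^{-k}$ lies in $q^{-k}\ZZ[[q]]$. Multiplying a bounded-denominator power series by $\Delta^{-k}$ yields another bounded-denominator series, which gives the conclusion for~$f$.

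\textbf{The main obstacle} is the integral structure invoked in the middle step: that $M_{12k}(\Gamma^1(N),\QQ)$ is spanned over $\QQ$ by forms with integral $q$-expansions at~$\infty$. The rest of the argument is elementary bookkeeping with $q$-expansions and the explicit product formula for~$\Delta$, but this input is non-trivial and ultimately rests either on integral models of modular curves (Deligne--Rapoport, Katz) or on an explicit construction of integral generators for the graded ring of modular forms of level~$N$.
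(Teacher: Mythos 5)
Your proposal is correct and follows essentially the same route as the paper's (sketched) proof: multiply by a power of $\Delta=\eta^{24}$ to obtain a holomorphic form, invoke the fact that the space of modular forms of a given weight and level is spanned by forms with integral $q$-expansions, and divide back by $\Delta^{k}$ using that $\Delta^{-1}\in q^{-1}\ZZ[[q]]$. The integral-basis input you flag as the main obstacle is exactly the black box the paper also defers to (Kubert--Lang, Lemma 3.1), so no further justification is expected here.
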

\begin{proof}
See \cite[Lemma 3.1]{kubert-lang-IV} for the proof,
of which we give a sketch here.
After multiplying
by a suitable power of $\eta^{24}$, the function
becomes a cusp form.
The vector space of cusp forms of given weight
is generated by forms with \emph{integer}
Fourier expansions, hence the result follows.
\end{proof}

For a formal power series $f$ with constant coefficient~$1$
and for $a,b\in\ZZ\setminus\{0\}$,
we define $f^{a/b}$ to be the unique $b$th root of~$f^a$
with constant coefficient~$1$.
For a holomorphic, non-vanishing function $f$ on~$\HH$,
we denote by $f^{a/b}$ any holomorphic $b$th root of~$f^a$.

\begin{proposition}\label{prop:integral}
Let $f$ be a \emph{modular} function of any level
and suppose that we have
$$f = c\prod_{k=1}^m H_{k}^{e(k)}$$
with $e\in\QQ^m$, $c\in\CC^*$ and $m=\lfloor N/2\rfloor$.
Then for all $k$
we have $e(k)\in\ZZ$ if $2k\not=N$ and $e(k)\in\frac{1}{2}\ZZ$
if $2k=N$.
\end{proposition}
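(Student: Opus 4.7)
The plan is to translate the hypothesized identity into a power-series identity in $\QQ[[q^{1/N}]]$, upgrade it to $\ZZ[[q^{1/N}]]$ via Gauss' Lemma (Corollary~\ref{cor:gauss}), and then read off each $e(k)$ from the leading term of the reduced Siegel-function $q$-expansion.

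First I would use Lemma~\ref{lem:propertiesofh}.\ref{item:propertiesofh1} to factor each Siegel function as
\[ h_{(k/N,0)} = i\, q^{\alpha_k}\, h^*_{(k/N,0)}, \qquad \alpha_k=\tfrac{1}{2}\bigl((k/N)^2-k/N+\tfrac{1}{6}\bigr)\in\QQ, \]
where the reduced factor $h^*_{(k/N,0)}=1-s_k q^{k/N}+(\text{higher-order terms})$ lies in $\ZZ[[q^{1/N}]]$, with $s_k=1$ if $2k\ne N$ and $s_k=2$ if $2k=N$. Substituting into the hypothesis yields $f=C\, q^A\, g$ for some $C\in\CC^*$, $A\in\QQ$, where
\[ g:=\prod_{k=1}^m (h^*_{(k/N,0)})^{e(k)}\in\QQ[[q^{1/N}]] \]
has constant term $1$. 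Choosing a common denominator $b$ of the $e(k)$, the $b$th power $g^b=\prod (h^*_{(k/N,0)})^{b e(k)}$ has integer exponents throughout, so $g^b\in\ZZ[[q^{1/N}]]$.

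Since every pole of $f$ is a pole of some $h_{(k/N,0)}$ and hence lies at a cusp, $f$ is a modular unit, and Proposition~\ref{prop:qexpbounded} (applied in a number-field setting if needed---the Kubert-Lang cusp-form argument carries over verbatim) gives that the $q$-expansion of $f$ has bounded denominator. Dividing by the nonzero $Cq^A$ preserves this, so $g$ also has bounded denominator in $\QQ[[q^{1/N}]]$. Iterating Corollary~\ref{cor:gauss} on the factorisation $g\cdot g^{b-1}=g^b$ then forces $g\in\ZZ[[q^{1/N}]]$. To extract the exponents, I argue by contradiction: suppose $e(k)\notin\tfrac{1}{s_k}\ZZ$ for some $k$, and let $k_0$ be the smallest such index. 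For $k<k_0\le m$ one has $s_k=1$ (the exceptional value $s_k=2$ occurs only when $k=m=N/2$), hence $e(k)\in\ZZ$ and $\prod_{k<k_0}(h^*_{(k/N,0)})^{e(k)}$ is a unit of $\ZZ[[q^{1/N}]]$. Consequently $g_{\ge k_0}:=\prod_{k\ge k_0}(h^*_{(k/N,0)})^{e(k)}\in\ZZ[[q^{1/N}]]$. A direct computation of the coefficient of $q^{k_0/N}$ in $g_{\ge k_0}$ shows that only the linear term of the generalised binomial expansion of $(h^*_{(k_0/N,0)})^{e(k_0)}$ contributes, yielding $-s_{k_0}e(k_0)\in\ZZ$, i.e.\ $e(k_0)\in\tfrac{1}{s_{k_0}}\ZZ$---a contradiction.

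The main technical obstacle is the bounded-denominator step. Proposition~\ref{prop:qexpbounded} is stated only for modular units with \emph{rational} $q$-expansion, yet the scalar $C=c\cdot i^{\sum_k e(k)}$ in the present setting may force the $q$-expansion of $f$ into a nontrivial extension of $\QQ$. I would resolve this either by running the Kubert-Lang cusp-form argument over an appropriate number field (the proof is formally unchanged, since cusp forms of any level over a number field admit an integer-coefficient basis), or by first replacing $f$ with a product of its Galois conjugates in order to descend to the rational case before invoking the proposition.
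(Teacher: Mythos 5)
Your proof is correct and follows essentially the same route as the paper's: normalise to the reduced series $f^* = \prod_k (h^*_{(k/N,0)})^{e(k)}$, use bounded denominators (Proposition~\ref{prop:qexpbounded}) together with Gauss' lemma (Corollary~\ref{cor:gauss}) applied to an integral power to place $f^*$ in $\ZZ[[q^{1/N}]]$, and then read off $s_{k_0}e(k_0)\in\ZZ$ from the coefficient of $q^{k_0/N}$ by induction on the smallest bad index. Your closing remark about the possibly irrational constant $C$ is a genuine subtlety that the paper's proof passes over silently; either of your proposed fixes works, and in the paper's actual application $f\in\mathcal{O}(Y^1(N))^*$ has rational $q$-expansion, so the leading coefficient is rational and the issue disappears.
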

\begin{proof}
Taking reduced forms (as defined in the proof of Proposition~\ref{prop:defreducedform})
on both sides,
we get
$$(f^*)^n = \prod_{k=1}^m (H_{k}^*)^{n\cdot e(k)}$$
for some $n$ with $ne\in\ZZ^m$.
The right hand side has integer coefficients,
so
by Proposition~\ref{prop:qexpbounded}
and Corollary~\ref{cor:gauss},
we find that $f^*$ has integer coefficients.

We prove the result by induction on~$k$.
Suppose it is true for all $k<k_0$.
We have
$$f^*\cdot \prod_{k=1}^{k_0-1}(H_{k}^*)^{-e(k)}
= \prod_{k=k_0}^m (H_{k}^*)^{e(k)},$$
and the left hand side has integer coefficients.
By~\eqref{eq:qreducedproduct}, the right hand side
has a coefficient $- e(k_0)$ if $2k_0\not=N$ 
and $-2e(k_0)$ if $2k_0=N$,
hence the result follows.
\end{proof}

\subsection{Using the action}
\label{sec:proof}

Next, we use the action of $\mathrm{SL}_2$.
Recall $m = \lfloor N/2\rfloor$.

\begin{theorem}\label{thm:congruences}
Let $f\in\mathcal{O}(Y^1(N))^*$.
Then $f = c\prod_{k=1}^{m} H_{k}^{e(k)}$,
where $c\in\QQ^*$ and $e\in\ZZ^m$
are uniquely determined by $f$.
Moreover, the vector $e$ satisfies~\eqref{eq:modulo}
that is, it satisfies
\[
\sum_k e(k) \in 12\ZZ\qquad\mbox{and}\qquad
\sum_k k^2 e(k) \in N\gcd(N,2)\ZZ.
\]
\end{theorem}
\begin{proof}
By Corollary~\ref{cor:finiteindex}, we find that $f$ can be written
as $c\prod H_{k}^{e(k)}$ with~$e(k)\in\QQ$.
Here $e$ is uniquely determined
by Proposition~\ref{prop:linearlyindep}.
Moreover, the numbers
$e(1)$, $e(2)$, \ldots, $e(m-1)$ are in~$\ZZ$
by Proposition~\ref{prop:integral}.
Next, we prove~\eqref{eq:modulo},
which also implies $e(m)\in\ZZ$.

Consider the matrix $$M = \left(\begin{array}{cc} 1 & 0\\
1 & 1\end{array}\right)\in\Gamma^1(N).$$
Then 
we have
$f(M\tau) = f(\tau)$, so we inspect
the action of $M$ on the functions~$H_{k}$.
Parts \eqref{item:propertiesofh6} and~\eqref{item:propertiesofh7}
of 
Lemma~\ref{lem:propertiesofh} give
$H_{k}(M\tau) = \exp(2\pi i / 12)^{-1} H_{k}(\tau)$
for this matrix~$M$.
In particular, we get $\sum_k e(k)\in 12\ZZ$.

Next, consider the matrix $$M = \left(\begin{array}{cc} 1 & N\\
0 & 1\end{array}\right)\in\Gamma^1(N).$$
Again we have $f(M\tau) = f(\tau)$, that is,
$f(\tau+N) = f(\tau)$,
which shows that the $q$-expansion of $f$
is in $\CC((q^{1/N}))$.
In the product expansion~\eqref{eq:q},
we consider the leading term
$-i q^{\frac{1}{2}(a_1^2-a_1+\frac{1}{6})}$
(with $a_1=k/N$)
of $H_{k}$.
As the leading term of $f$
is a constant times a power of $q^{1/N}$,
we get
$$\frac{1}{12N^2}\sum_{k=1}^m e(k)(6k^2-6kN+N^2)\in \frac{1}{N}\ZZ.$$

As we already have $\sum e(k)\in 12\ZZ$,
we get
$$\sum_{k=1}^m e(k)(k^2 - kN)\in 2N\ZZ\subset N\ZZ,$$
hence in particular $\sum e(k) k^2\in N\ZZ$.
If $N$ is odd, then this finishes the proof of~\eqref{eq:modulo}.
If $N$ is even, then we get
\begin{align*}
(1-N)\sum_{k=1}^m e(k)k^2 &= \sum_{k=1}^m e(k)(k^2 - k^2 N)\\
&\equiv \sum_{k=1}^m e(k)(k^2-kN)
\equiv 0 \ \mathrm{mod}\ 2N\ZZ,
\end{align*}
and since $N-1$ is coprime to $2N$, this
proves~\eqref{eq:modulo}
and hence $e(m)\in\ZZ$.

It remains to prove $c\in\QQ^*$.
Let $g=f/c$, which is in $\mathcal{O}(Y^1(N))^*$
by Proposition~\ref{prop:hinp}.
Then $c=f/g$ is a constant in $\mathcal{O}(Y^1(N))^*$,
hence is in~$\QQ^*$.
\end{proof}
\begin{proof}[Proof of the main theorems]
Proposition~\ref{prop:express} states exactly
that Theorem~\ref{thm:congruences}
and the rank statement in Proposition~\ref{prop:linearlyindep}
imply
Theorems \ref{thm:mainthm}, \ref{thm:mainthmtwo},
\ref{thm:conjdvh} and \ref{thm:mainthmthree}.
\end{proof}
\begin{remark}
Results similar to Theorem~\ref{thm:congruences},
but assuming integral exponents $e(k)$ and working
with $\Gamma(N)$,
are already known.
These results are insufficient for proving our main
results as they assume that $e(k)$ is integral.

In the special case where $N$ is coprime
to~$6$, they can be used to an alternative
proof of our Theorem~\ref{thm:congruences}
as follows.
If $N$ is odd, then Proposition~\ref{prop:integral}
gives $e\in\ZZ^m$.
For $e\in\ZZ^m$,
Kubert and Lang \cite[Theorem 5.2 and~5.3 on pp.~76--78 in Chapter~3]{kubert-lang-book}
give conditions on $e$ for $f$ to be modular of level~$\Gamma(N)$.
The conditions are complicated,
but if $N$ is coprime to $6$,
then the conditions give exactly \eqref{eq:modulo}, which reproves
Theorem~\ref{thm:congruences} in that case.
\end{remark}

\section{Bonus section}

There are two results that we get almost for free after
all the work that was done towards the main theorem.
We give them here.

\subsection{Ring generators}\label{sec:ring}

In this section,
we give complex analytic functions
that generate the ring
$\mathcal{O}(Y^1(N))$
itself,
instead of its unit group.

\begin{theorem}\label{thm:ringgenerators}
The ring $\mathcal{O}(Y^1(N))$ is generated
as a $\QQ$-algebra by the three functions
$$ b = -t^{3} \frac{H_{2}}{H_{1}},
\quad
c = 
-H_{4} H_{1}^4 H_{2}^{-5},
\quad
d^{-1} = (tH_{1})^{-12},$$
where
$$ t = \frac{H_{1}^2H_{3}}{H_{2}^3}.$$
\end{theorem}
\begin{proof}
By Proposition~\ref{prop:jin},
we have $\mathcal{O}(Y^1(N)) = \QQ[b,c,d^{-1}]$.
We have $b = -p_2$ and $c = p_4/b^5$ by Example~\ref{ex:P}.
Lemma~\ref{lem:express} gives the formulas in terms of
Siegel functions.
\end{proof}

Theorem~\ref{thm:ringgenerators}
is comparable to the main result
of Koo and Yoon~\cite{koo-yoon-ringgenerators}.
Indeed, both give a set of complex analytic
functions that generate the $\QQ$-algebra
$\mathcal{O}(Y^1(N))$,
and through the isomorphism of
Remark~\ref{rem:nonstandard}
also the $\QQ$-algebra
of holomorphic modular functions
on $\Gamma_1(N)\backslash \HH$ with rational
$\QQ$-expansion.
The methods are however completely different.

As for the results themselves, they
are different as well.
First of all, the main result of
\cite{koo-yoon-ringgenerators}
(that is, Theorems 4.5 and~5.2 and Corollary~5.3 of loc.~cit.)
are for
$N=2$, $N=3$ and all~$N$
divisible by $4$, $5$, $6$, $7$ or~$9$,
while our result is for 
all $N\geq 4$.
Second, we give a uniform formula with three generators,
while \cite{koo-yoon-ringgenerators}
has a few different cases, each with
$2$ to~$6$ generators.

\subsection{Expressions in terms of the Jacobi theta function}\label{sec:theta}

In this section, we express the functions $p_k$ in terms of the Jacobi theta function.
This has two applications. First of all, this theta function
can be numerically evaluated efficiently, as in Labrande~\cite{labrande}.
Second, it has a natural generalization to the moduli space of
higher-dimensional abelian varieties
(Riemann theta functions),
potentially opening our results to future higher-dimensional generalisations.

For $c,d\in\RR$, 
the \emph{theta function $\theta[c,d]$ with characteristic} $(c,d)$ is the
function in $z\in\CC$ and $\tau\in\HH$ defined by
\begin{align*}
\theta[c,d](z,\tau)
&= \sum_{n\in\ZZ} \exp\left(\pi i (n+c)^2 \tau + 2\pi i (n+c)(z+d)\right)\\
&= 
                  e^{\pi i c^2 \tau + 2\pi i c(z+d)}\cdot 
          \theta[0,0](z+c\tau+d, \tau).
\end{align*}
We will use a special case, known as the \emph{Jacobi theta function}
$\theta_1 = \theta[\frac{1}{2},\frac{1}{2}] = \theta[-\frac{1}{2},-\frac{1}{2}]$,
that is,
\begin{align*}
\theta_{1}(z,\tau)
&= i \sum_{n\in\ZZ} (-1)^n q^{\frac{1}{2} (n-\frac{1}{2})^2}e^{\pi i(2n-1)z}.
\end{align*}
\newcommand{\T}{T}
\begin{proposition}
Consider the functions $\T_{k}$ given by
\begin{align*}
\T_{k} &= \theta_1\left(\frac{k\tau}{N},\tau\right).
\end{align*}
Then we have for all integers $k$
$$ p_{k} = \left(\frac{\T_1^2\T_3^{\phantom{2}}}{\T_2^3}\right)^{k^2-1}\frac{\T_{k}}{\T_1}
\qquad\mbox{and}\qquad
d =
\left(\frac{\T_1^3\T_3^{\phantom{2}}}{\T_2^3\eta} \right)^{12}.
$$
\end{proposition}
\begin{proof}
Let $\theta_1'(z,\tau) = \frac{d}{dz} \theta_1(z,\tau)$.
Let $\Lambda = 2\omega_1\ZZ+2\omega_3\ZZ$ with $\tau=\omega_3/\omega_1\in\HH$.
Then (6.22) in Theorem 6.5 on page~199 of\cite{markushevich} states
(note that our $q$ is the square of the $q$ in loc.~cit.)
$$\sigma(z,\Lambda) = 2\omega_1 \frac{\theta_1(z/(2\omega_1), \tau)}{\theta_1'(0, \tau)} \exp(\eta_1 z^2 / (2\omega_1)).$$
We choose $\omega_1=\frac{1}{2}$ and $\omega_3=\frac{1}{2}\tau$ to get
$\sigma(z, \tau) = c_1 \exp(c_2z^2) \theta_1(z,\tau)$, where $c_1=\theta_1'(0,\tau)^{-1}$
and $c_2 = \eta_1$ are functions of $\tau$
independent of~$z$.
We apply this to the formulas in Corollary~\ref{cor:sigma} and get
\begin{align*}
\phi_k &:= \frac{\sigma\left(\frac{k\tau}{N},\tau\right)}{\sigma\left(\frac{\tau}{N},\tau\right)^{k^2}}
= c_1^{1-k^2} \frac{T_k}{T_1^{k^2}},\\
u &:= \frac{\phi_3}{\phi_2^3} = c_1 \frac{\T_1^3 T_3^{\phantom{3}}}{\T_2^3},\\
p_k &\phantom{:}= u^{k^2-1}\phi_k = \left(\frac{u}{c_1}\right)^{k^2-1} \frac{\T_{k}}{\T_1^{k^2}}
= \left(\frac{\T_1^2\T_3^{\phantom{2}}}{\T_2^3}\right)^{k^2-1}\frac{\T_{k}}{\T_1},\\
d &\phantom{:}= (2\pi \eta^2u)^{12}.
\end{align*}
This proves the formula for $p_{k}$.
To prove the formula for $d$, it suffices
to prove $2\pi \eta^2 u = \pm u/(c_1\eta)$, or
in other words, $2\pi \eta^3 = \pm \theta_1'(0,\tau)$.
But that is exactly 
the formula for $\theta_1'(0,\tau)$ in the middle of page 210 of
Markushevich~\cite{markushevich}
together with (6.52) on page 211 of loc.~cit.
(In fact, reading further in \cite{markushevich}, we get
that the sign is~$+$,
but we do not need this.)
\end{proof}

\bibliographystyle{plain}
\bibliography{modpol}
\end{document}